\theoremstyle{plain}
   \newtheorem{theorem}{Theorem}[section]
   \newtheorem{lemma}[theorem]{Lemma}
   \newtheorem{proposition}[theorem]{Proposition}
\theoremstyle{definition}
   \newtheorem{remark}[theorem]{Remark}
\theoremstyle{remark}
\newcommand{\Taustar}{{\mathcal T_*}}
\newcommand{\h}{H}
\DeclareMathOperator{\osci}{osc}
\newcommand{\osc}[2]{\osci_\Tau(#1;#2)}
\newcommand{\gosck}[1]{\osci_{\Tau_k}(#1)}
\newcommand{\step}[1]{\noindent\raisebox{1.5pt}[10pt][0pt]{\tiny\framebox{$#1$}}\xspace}
\newcommand{\definedas}{:=}
\newcommand{\asdefined}{=:}
\newcommand{\D}{\displaystyle}
\newcommand{\normT}[1]{\left\| {#1} \right\|_{T}}
\newcommand{\normNT}[1]{\left\| {#1} \right\|_{\omega_\Tau(T)}}
\newcommand{\normS}[1]{\left\| {#1} \right\|_{S}}
\newcommand{\normbT}[1]{\left\| {#1} \right\|_{\partial T}}
\DeclareMathOperator{\diam}{diam}
\newcommand{\est}[2]{\eta_\Tau(#1;#2)}
\newcommand{\gest}[1]{\eta_\Tau(#1)}
\newcommand\FF{\mathcal F}
\newcommand\JJ{\mathcal J}
\newcommand\NN{\mathbb N}
\newcommand\AAA{\mathcal A}
\newcommand\MM{\mathcal M}
\newcommand\Tau{\mathcal T}
\newcommand\PP{\mathcal P}
\newcommand\TT{\mathbb T}
\newcommand\VV{\mathbb V}
\newcommand\WW{\mathbb W}
\newcommand\RRR{\mathbf R}
\newcommand{\RR}{\mathbb R}
\newcommand{\nref}{n}
\begin{document}

\title{Quasi-optimal convergence rate of an\\ AFEM for quasi-linear problems}

\author{Eduardo M. Garau \and Pedro Morin \and Carlos Zuppa}

\maketitle

\begin{abstract}
We prove the quasi-optimal convergence of a standard adaptive finite element method (AFEM) for nonlinear elliptic second-order equations of monotone type. The adaptive algorithm is based on residual-type a posteriori error estimators and D\"orfler's strategy is assumed for marking. We first prove a contraction property for a suitable definition of total error, which is equivalent to the total error as defined by Casc\'on et al.~\cite{CKNS-quasi-opt}, and implies linear convergence of the algorithm. Secondly, we use this contraction to derive the optimal cardinality of the AFEM.
\end{abstract}

\begin{quote}\small
\textbf{Keywords:} nonlinear elliptic equations; adaptive finite element methods; optimality.
\end{quote}

\section{Introduction}

The main goal of this article is the study of convergence and optimality properties of an adaptive finite element method (AFEM) for 
quasi-linear elliptic partial differential equations over a polygonal/polyhedral domain $\Omega\subset \RR^d$ ($d=2,3$)  having the form
\begin{equation}\label{E:cons law}
\left\{
\begin{aligned}
-\nabla\cdot \big[\alpha(\,\cdot\,,|\nabla u|^2)\nabla u\big]&= f\qquad & & \text{in}\,\Omega\\
u&= 0\qquad & &\text{on}\,\partial \Omega,
\end{aligned}
\right.
\end{equation}
where $\alpha:\Omega\times \RR_+\to \RR_+$ is a bounded positive function whose precise properties will be stated in Section~\ref{S:setting} below, and $f\in L^2(\Omega)$ is given. This kind of problems arise in many practical situations, for example, in shock-free airfoil design, seepage through coarse grained porous media, and in some glaciological problems~\cite{Chow}.

AFEMs are an effective tool for making an efficient use of the computational resources, and for certain problems, it is even indispensable to their numerical resolvability.  
The ultimate goal of AFEMs is to equidistribute the error and the computational effort obtaining a sequence of meshes with optimal complexity. Adaptive methods are based on a posteriori error estimators, that are computable quantities depending on the discrete solution and data, and indicate a distribution of the error. 
A quite popular, natural adaptive version of classical finite element methods consists of the loop
\begin{equation}\label{E:adaptive loop}
 \textsc{Solve $\to$ Estimate $\to$ Mark $\to$ Refine},
\end{equation}
that is: solve for the finite element solution on the current grid, compute the a~posteriori error estimator, mark with its help elements to be subdivided, and refine the current grid into a new, finer one. 
 
A general result of convergence for linear problems has been obtained by Morin, Siebert and Veeser~\cite{MSV-convergence}, where very general conditions on the linear problems and the adaptive methods that guarantee convergence are stated. Following these ideas a (plain) convergence result for elliptic eigenvalue problems has been proved in~\cite{GMZ-08}. 
On the other hand, optimality of adaptive methods using \textit{D{\"o}rfler's marking strategy}\cite{Dorfler} for linear elliptic problems has been stated by Stevenson\cite{Stevenson} and Casc\'on, Kreuzer, Nochetto and Siebert\cite{CKNS-quasi-opt}. Linear convergence of an AFEM for elliptic eigenvalue problems has been proved in~\cite{Giani}, and optimality results can be found in~\cite{GM-09, xu-etal}. For a summary of convergence and optimality results of AFEM we refer the  reader to the survey~\cite{NSV-survey} and the references therein. We restrict ourselves to those references strictly related to our work.

Well-posedness and finite element error estimates for problem~\eqref{E:cons law} have been stated in~\cite{Chow}. A posteriori error estimators for nonconforming approximations have been developed in~\cite{Padra}. 
Linear convergence of an AFEM for the $\varphi$-Laplacian problem in a context of Sobolev-Orlicz spaces has been established in~\cite{diening-kreuzer}.
Recently, the (plain) convergence of an adaptive \emph{inexact} FEM for problem~\eqref{E:cons law} has been proved in~\cite{GMZ-kachanov}, where \emph{only} a discrete linear system is solved before each adaptive refinement.

In this article we consider a standard adaptive loop of the form~\eqref{E:adaptive loop} based on classical residual-type a posteriori error estimators, where the Galerkin discretization for problem~\eqref{E:cons law} is considered. We use the D\"orfler's strategy for marking and assume a minimal bisection refinement. The goal of this paper is to prove the optimal complexity of this AFEM by stating two main results. The first one establishes the convergence of the adaptive loop through a contraction property. More precisely, we will prove the following

\begin{theorem}[Contraction property]
Let $u$ be the weak solution of problem~\eqref{E:cons law} and let $\{U_k\}_{k\in\NN_0}$ be the sequence of discrete solutions computed through the adaptive algorithm described in Section~\ref{S:convergence}. Then, there exist constants $0<\rho<1$ and $\mu>0$ such that
$$[\FF(U_{k+1})-\FF(u)]+\mu \eta_{k+1}^{2} \leq \rho^2([\FF(U_k)-\FF(u)]+\mu \eta _k^{2}),\qquad\forall\,k\in\NN_0,$$
where $[\FF(U_k)-\FF(u)]$ is a notion equivalent to the energy error and $\eta_k$ denotes the global a posteriori error estimator in the mesh corresponding to the step $k$ of the iterative process.
\end{theorem}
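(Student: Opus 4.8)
The plan is to follow the by-now-standard road map of Casc\'on, Kreuzer, Nochetto and Siebert~\cite{CKNS-quasi-opt}, but with the squared energy norm of the error replaced throughout by the energy-difference quantity $[\FF(U_k)-\FF(u)]$, which is what makes the quasi-orthogonality step clean in the nonlinear setting. First I would recall that, under the monotonicity and Lipschitz hypotheses on $\alpha$ stated in Section~\ref{S:setting}, problem~\eqref{E:cons law} is the Euler--Lagrange equation of a strictly (indeed uniformly) convex energy $\FF$, that $u$ and $U_k$ are its unique minimizers over $H^1_0(\Omega)$ and over the finite element space on the mesh $\Tau_k$ respectively, and that uniform convexity gives the equivalence $[\FF(v)-\FF(u)]\simeq\|v-u\|_a^2$ with constants independent of $v$. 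Because the spaces are nested, $\FF(U_{k+1})\le\FF(U_k)$, hence the energy error $[\FF(U_k)-\FF(u)]$ is monotone decreasing in $k$.

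The second ingredient plays the role of the Pythagoras identity. Since $U_{k+1}$ minimizes $\FF$ over a space containing $U_k$, and the Galerkin equation gives $\langle\FF'(U_{k+1}),v\rangle=0$ for every $v$ in that space, uniform convexity of $\FF$ yields
\[
[\FF(U_k)-\FF(u)]-[\FF(U_{k+1})-\FF(u)]=\FF(U_k)-\FF(U_{k+1})\ \ge\ C_1^{-1}\,\|U_{k+1}-U_k\|_a^2 ,
\]
with no lower-order perturbation term, in contrast with norm-based treatments of nonsymmetric or nonlinear problems; this is the technical payoff of the energy-difference formulation. The third ingredient is the estimator side: the residual estimator $\eta_k$ is reliable, $[\FF(U_k)-\FF(u)]\lesssim\|u-U_k\|_a^2\le C_{\mathrm{rel}}\,\eta_k^2$, and obeys an estimator reduction estimate — for every $\delta>0$,
\[
\eta_{k+1}^2\ \le\ (1+\delta)\big[\eta_k^2-\lambda\,\eta_k(\text{marked elements})^2\big]+(1+\delta^{-1})\,C_0\,\|U_{k+1}-U_k\|_a^2 ,
\]
where $\lambda\in(0,1)$ comes from the guaranteed reduction of element diameters under one bisection step and $C_0$ from inverse and trace inequalities together with the Lipschitz continuity of the flux $\xi\mapsto\alpha(\,\cdot\,,|\xi|^2)\xi$. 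Combining with D\"orfler's marking $\eta_k(\text{marked})^2\ge\theta^2\eta_k^2$ gives $\eta_{k+1}^2\le(1+\delta)(1-\lambda\theta^2)\,\eta_k^2+(1+\delta^{-1})C_0\|U_{k+1}-U_k\|_a^2$.

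Finally I would assemble the contraction. Write $e_k^2:=[\FF(U_k)-\FF(u)]$ and $D_k:=e_k^2-e_{k+1}^2\ge0$, so that the quasi-orthogonality bound reads $\|U_{k+1}-U_k\|_a^2\le C_1 D_k$. Then
\[
e_{k+1}^2+\mu\,\eta_{k+1}^2\ \le\ e_k^2-D_k+\mu(1+\delta)(1-\lambda\theta^2)\,\eta_k^2+\mu(1+\delta^{-1})C_0C_1\,D_k .
\]
Choose $\delta>0$ small enough that $\rho_1^2:=(1+\delta)(1-\lambda\theta^2)<1$; then choose $\mu>0$ small enough that $\mu(1+\delta^{-1})C_0C_1\le1$, so the $D_k$-terms are nonpositive and may be dropped; then split $e_k^2=(1-\beta)e_k^2+\beta e_k^2$ and estimate $\beta e_k^2\le\beta C_{\mathrm{rel}}\,\eta_k^2$ by reliability, with $\beta>0$ chosen so small that $\rho_1^2+\beta C_{\mathrm{rel}}/\mu<1$. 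This gives
\[
e_{k+1}^2+\mu\,\eta_{k+1}^2\ \le\ (1-\beta)\,e_k^2+\big(\rho_1^2+\beta C_{\mathrm{rel}}/\mu\big)\,\mu\,\eta_k^2\ \le\ \rho^2\big(e_k^2+\mu\,\eta_k^2\big),
\]
with $\rho^2:=\max\{\,1-\beta,\ \rho_1^2+\beta C_{\mathrm{rel}}/\mu\,\}<1$, which is the assertion.

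The step I expect to be the main obstacle is the estimator reduction estimate, and within it the local Lipschitz dependence of the element and jump residuals on the discrete solution: one must bound the perturbation of the indicators when $U_k$ is replaced by $U_{k+1}$ by $\|U_{k+1}-U_k\|_a$ on a local patch, which in the quasi-linear case requires quantitative Lipschitz bounds on $\alpha$ and its derivative (hence on the flux) and a careful use of inverse estimates so that the $H^1$-norm of the discrete perturbation — and not a stronger norm — suffices. A secondary delicate point, though one that is milder here than in norm-based approaches, is establishing the uniform convexity of $\FF$ underlying both the equivalence $[\FF(v)-\FF(u)]\simeq\|v-u\|_a^2$ and the perturbation-free quasi-orthogonality; this is where the precise structural assumptions on $\alpha$ from Section~\ref{S:setting} are used.
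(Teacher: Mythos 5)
Your proposal is correct and follows essentially the same route as the paper's proof: decompose $\FF(U_k)-\FF(u)$ additively to obtain a perturbation-free quasi-orthogonality from the uniform convexity of $\FF$ (Theorem~\ref{T:equivalence for error}), combine with the estimator reduction (Proposition~\ref{P:reduccion del estimador}), D\"orfler marking and reliability (Theorem~\ref{T:global upper bound}), then tune $\delta$ and $\mu$ so that both coefficients drop below $1$. The only cosmetic difference is the final re-balancing step: you split off a fraction $\beta e_k^2$ of the error term and absorb it into the estimator term via reliability, whereas the paper splits off a fraction of the estimator term $\eta_k^2$ and converts it into a negative multiple of $[\FF(U_k)-\FF(u)]$ — mirror uses of the same inequality; you also correctly identify the local Lipschitz bound of Lemma~\ref{L:bound for gtau} underlying Proposition~\ref{P:reduccion del estimador} as the genuinely nonlinear technical obstacle.
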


The second main result shows that, if the solution of the nonlinear problem~\eqref{E:cons law} can be ideally approximated with adaptive meshes at a rate $(DOFs)^{-s}$, then the adaptive algorithm generates a sequence of meshes and discrete solutions which converge with this rate. Specifically, we will prove the following

\begin{theorem}[Quasi-optimal convergence rate]
Assume that the solution $u$ of problem~\eqref{E:cons law} belongs to $\mathbb{A}_{s}$.\footnote{Roughly speaking, $u\in\mathbb{A}_s$ if $u$ can be approximated with adaptive meshes with a rate $(DOFs)^{-s}$ (cf.~\eqref{E:As} in Section~\ref{S:optimality}).} Let $\{\Tau_k\}_{k\in\NN_0}$ and $\{U_k\}_{k\in\NN_0}$ denote the sequence of meshes and discrete solutions computed through the adaptive algorithm described in Section~\ref{S:convergence}, respectively. If the marking parameter $\theta$ in D\"orfler's criterion is small enough (cf.~\eqref{E:dorfler} and~\eqref{E:restriccion theta}), then
\begin{equation*}
\big[\|\nabla(U_k-u)\|_\Omega^2+\osci_{\Tau_k}^2(U_k)\big]^{\frac12}=\mathcal{O}\left( (\#\Tau_k-\#\Tau_0)^{-s}\right),\qquad\forall\,k\in\NN.
\end{equation*}
The left-hand side is called \emph{total error} and consists of the energy error plus an oscillation term.
\end{theorem}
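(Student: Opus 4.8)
The plan is to follow the by-now-standard Stevenson / Cascón--Kreuzer--Nochetto--Siebert optimality machinery, adapted to the nonlinear energy setting. The key quantities are the quasi-error $\gamma_k \definedas [\FF(U_k)-\FF(u)]+\mu\eta_k^2$, for which the Contraction Theorem gives $\gamma_{k+1}\le\rho^2\gamma_k$, and the total error $\big[\|\nabla(U_k-u)\|_\Omega^2+\osci_{\Tau_k}^2(U_k)\big]^{1/2}$. The skeleton has four ingredients. First, one needs upper and lower a posteriori bounds relating $\eta_k$, the energy error $\|\nabla(U_k-u)\|_\Omega$ and the oscillation $\osci_{\Tau_k}(U_k)$, together with the equivalence of $[\FF(U_k)-\FF(u)]$ with the squared energy error (asserted in the Contraction Theorem); from these, the total error and the quasi-error $\gamma_k$ are mutually equivalent up to constants, so it suffices to prove $\gamma_k=\mathcal{O}\big((\#\Tau_k-\#\Tau_0)^{-2s}\big)$.

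Second — and this is where the nonlinearity must be handled carefully — one proves a \emph{localized upper bound} (a ``discrete local efficiency'' or ``quasi-orthogonality plus localization'' estimate): if $\Tau_*$ is any refinement of $\Tau_k$ and $\mathcal{R}_{k\to *}$ is the set of refined elements, then the difference of energy errors is controlled by the estimator restricted to $\mathcal{R}_{k\to *}$, i.e. something of the form $\|\nabla(U_k-U_*)\|_\Omega^2 \lesssim \sum_{T\in\mathcal{R}_{k\to*}}\est{U_k}{T}^2$. In the linear case this uses Galerkin orthogonality; here it will rest instead on the strong monotonicity and Lipschitz continuity of the operator $v\mapsto -\nabla\cdot[\alpha(\cdot,|\nabla v|^2)\nabla v]$ (the structural hypotheses on $\alpha$ from Section~\ref{S:setting}), which give a Céa-type bound and quasi-orthogonality with respect to the energy $\FF$. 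I expect this monotone-operator estimate, and the attendant loss of the clean Pythagorean identity, to be the \textbf{main obstacle}.

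Third, one invokes the \emph{optimal marking / Dörfler-is-necessary} lemma: because $u\in\mathbb{A}_s$, for each $k$ there is a refinement $\Tau_*$ of $\Tau_0$ with $\#\Tau_*-\#\Tau_0\lesssim \gamma_k^{-1/(2s)}$ achieving total error $\lesssim\gamma_k^{1/2}$ (up to a fixed factor); combining this with the localized upper bound of step two shows that the overlay $\Tau_k\oplus\Tau_*$ — equivalently, a small set of elements of $\Tau_k$ — already satisfies Dörfler's criterion with some parameter $\hat\theta$. Provided the chosen $\theta$ satisfies $\theta<\hat\theta$ (this is exactly the restriction~\eqref{E:restriccion theta}), Dörfler marking selects a set $\MM_k$ with $\#\MM_k\lesssim\#(\Tau_k\oplus\Tau_*)-\#\Tau_k\lesssim\gamma_k^{-1/(2s)}$.

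Fourth, one sums: the complexity estimate for bisection refinement (Binev--Dahmen--DeVore / Stevenson) gives $\#\Tau_k-\#\Tau_0\lesssim\sum_{j=0}^{k-1}\#\MM_j\lesssim\sum_{j=0}^{k-1}\gamma_j^{-1/(2s)}$, and the geometric decay $\gamma_j\ge\rho^{-2(k-j)}\gamma_k$ from the Contraction Theorem turns this sum into a convergent geometric series dominated by its last term, whence $\#\Tau_k-\#\Tau_0\lesssim\gamma_k^{-1/(2s)}$, i.e. $\gamma_k\lesssim(\#\Tau_k-\#\Tau_0)^{-2s}$. Translating back via the equivalence of $\gamma_k$ with the squared total error established in step one yields the claim. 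Throughout, the only genuinely new work relative to the linear theory is in steps one and two, where the quadratic energy $\FF$ and the monotone structure of $\alpha$ replace the bilinear form and its orthogonality; the remaining combinatorial and summation arguments carry over essentially verbatim.
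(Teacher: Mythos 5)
Your proposal is correct and follows essentially the same route as the paper: contraction of the quasi-error $[\FF(U_k)-\FF(u)]+\mu\eta_k^2$, its equivalence with the total error via the energy-equivalence theorem and the global upper/lower bounds, a localized upper bound for $\|\nabla(U_k-U_*)\|_\Omega$ obtained from monotonicity and Lipschitz continuity by linearizing around $U_k$, an optimal-marking lemma combined with the overlay and Céa's lemma for the total error to bound $\#\MM_k$, and finally Stevenson's complexity estimate together with the geometric decay of the quasi-error to sum up. The only minor imprecision is that you fold two distinct steps of the paper into one: the optimal-marking lemma (total-error reduction implies Dörfler for the refined set) and the cardinality lemma (which uses $u\in\mathbb{A}_s$, the overlay, and Céa's lemma for the total error to produce the competing refinement and then invokes optimal marking), but the logic is the same.
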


Basically, we follow the steps presented in~\cite{CKNS-quasi-opt} for linear elliptic problems. However, 
due to the \emph{nonlinearity} of problem~\eqref{E:cons law} the generalization of the mentioned results is not obvious. In particular, the \emph{Galerkin orthogonality property (Pythagoras)}
\begin{equation}\label{E:Galerkin orthogonality}
\|\nabla (U-u)\|_\Omega^2+\|\nabla (U-V)\|_\Omega^2=\|\nabla (V-u)\|_\Omega^2,
\end{equation}
where $U$ is a discrete solution and $V$ is a discrete test function, is used for linear elliptic problems in order to prove the contraction property and a generalized Cea's Lemma (the quasi-optimality of the total error), and does not hold when we consider problem~\eqref{E:cons law}. To overcome this difficulty we resort to ideas from~\cite{diening-kreuzer}, replacing~\eqref{E:Galerkin orthogonality} by the trivial equality
$$[\FF(U)-\FF(u)]+[\FF(V)-\FF(U)]=[\FF(V)-\FF(u)],$$
where each term in brackets is equivalent to the corresponding term in~\eqref{E:Galerkin orthogonality} (cf. Theorem~\ref{T:equivalence for error} below), and thus establish some kind of \emph{quasi-orthogonality relationship} for the energy error (cf. Lemma~\ref{L:quasi-ortho mesh}) which is sufficient to prove the quasi-optimality of the total error (cf. Lemma~\ref{L:generalized Cea}).

Additionally, it is necessary to study the behavior of the error estimators and oscillation terms when refining. In order to do that, we need to show that certain quantity, which measures the difference of error estimators and oscillation terms between two discrete functions (cf.~\eqref{E:gtau}), is bounded by the energy of the difference between these functions (see Lemma~\ref{L:bound for gtau} in Section~\ref{S:reducciones nolineal}). This result can be proved with usual techniques for linear elliptic problems using inverse inequalities and trace theorems, but the generalization of this result to nonlinear problems requires some new technical results. We establish suitable hypotheses on the main coefficient $\alpha$ of problem~\eqref{E:cons law} to be able to prove the mentioned estimation for the nonlinear problems that we study in this article.

This paper is organized as follows. In Section~\ref{S:setting} we present specifically the problem that we study and some of its properties. In Section~\ref{S:a posteriori error estimates}, we present a posteriori error estimations. In Section~\ref{S:convergence} we state the adaptive loop that we use for the approximation of problem~\eqref{E:cons law} and we prove its linear convergence through a contraction property. Finally, the last two sections of the article are devoted to prove that the AFEM converges with quasi-optimal rate.

\section{Setting}\label{S:setting}

Let $\Omega\subset\RR^d$ be a bounded polygonal ($d = 2$) or polyhedral ($d = 3$) domain with Lipschitz boundary. A weak formulation of~\eqref{E:cons law} consists in finding $u\in H^1_0(\Omega)$ such that
\begin{equation}\label{E:cont prob}
a(u;u,v)=L(v),\qquad\forall\,v\in H^1_0(\Omega),
\end{equation}
where
\begin{equation*}
a(w;u,v)=\int_\Omega \alpha(\,\cdot\,,|\nabla w|^2)\nabla u\cdot \nabla v,\qquad\forall\,w,u,v\in H^1_0(\Omega),
\end{equation*}
and
\begin{equation*}
L(v)=\int_\Omega fv,\qquad\forall\,v\in H^1_0(\Omega).
\end{equation*}
In order to make this presentation clearer, we define $\beta:\Omega\times \RR_+\to\RR_+$ by 
\begin{equation*}
\beta(x,t)\definedas\frac12\int_0^{t^2}\alpha(x,r)~dr,
\end{equation*}
and note that from Leibniz's rule the derivative of $\beta$ as a function of its second variable satisfies
\begin{equation*}
D_2\beta(x,t)\definedas\frac{\partial\beta}{\partial t}(x,t)=t\alpha(x,t^2).
\end{equation*}
We require that $\alpha$ is $\mathcal{C}^1$ as a function of its second variable and there exist positive constants $c_a$ and $C_a$ such that
\begin{equation}\label{E:main cond} 
c_a\le \frac{\partial^2\beta}{\partial t^2}(x,t)=\alpha(x,t^2)+2t^2D_2\alpha(x,t^2)\le C_a,\qquad\forall\,x\in\Omega,\,t>0.
\end{equation}
Since $\alpha(x,t^2) = \frac{D_2\beta(x,t)-D_2\beta(x,0)}{t}=\frac{\partial^2 \beta}{\partial t^2}(x,r)$, for some $0<r<t$ the last assumption yields
\begin{equation}\label{E:alpha bounded}
c_a\le\alpha(x,t)\le C_a,\qquad\forall\,x\in\Omega,\,t>0.
\end{equation}
It is easy to check that the form $a$ is linear and symmetric in its second and third variable. Additionally, from~\eqref{E:alpha bounded} it follows that $a$ is bounded,
\begin{equation}\label{E:a bounded}
|a(w;u,v)|\le C_a \|\nabla u\|_\Omega\|\nabla v\|_\Omega,\qquad\forall\,w,u,v\in H^1_0(\Omega),
\end{equation}
and coercive,
\begin{equation*}
 c_a\|\nabla u\|_\Omega^2\le a(w;u,u),\qquad\forall\,w,u\in H^1_0(\Omega).
\end{equation*}

Now, we sketch the proof that~\eqref{E:main cond} is sufficient to guarantee the well-posedness of problem~\eqref{E:cont prob}. Let $\gamma:\Omega\times\RR^d\to\RR_+$ be given by
\begin{equation*}
\gamma(x,\xi)\definedas\beta(x,|\xi|)=\frac12\int_0^{|\xi|^2}\alpha(x,r)~dr,
\end{equation*}
and note that if $\nabla_2\gamma$ denotes the gradient of $\gamma$ as a function of its second variable, then
\begin{equation}\label{E:gradiente de gamma}
\nabla_2\gamma(x,\xi)=\alpha(x,|\xi|^2)\xi,\qquad\forall\,x\in\Omega,\,\xi\in\RR^d.
\end{equation}
Condition~\eqref{E:main cond} means that $D_2\beta$ is Lipschitz and strongly monotone as a function of its second variable and it can be seen that $\nabla_2 \gamma$ so is~\cite{Zeidler}.

If $A:H^1_0(\Omega)\to H^{-1}(\Omega)$ is the operator given by
\begin{equation*}
\langle Au,v\rangle \definedas a(u;u,v),\qquad\forall\,u,v\in H^1_0(\Omega),
\end{equation*}
problem~\eqref{E:cont prob} is equivalent to the equation
$$Au=L,$$
where $L\in H^{-1}(\Omega)$ is given. It is easy to check that the properties of $\nabla_2 \gamma$ are inherited by $A$, i.e., $A$ is Lipschitz and strongly monotone. More precisely, there exist positive constants $C_A$ and $c_A$ such that
\begin{equation}\label{E:A Lipschitz}
\|Au-Av\|_{H^{-1}(\Omega)}\le C_A \|\nabla (u -v)\|_\Omega,\qquad \forall\,u,v\in H^1_0(\Omega),
\end{equation}
and
\begin{equation}\label{E:A strongly mon}
\langle Au-Av,u-v\rangle\ge c_A\|\nabla (u -v)\|_\Omega^2,\qquad \forall\,u,v\in H^1_0(\Omega).
\end{equation}
As a consequence of~\eqref{E:A Lipschitz} and~\eqref{E:A strongly mon}, problem~\eqref{E:cont prob} has a unique stable solution~\cite{Zarantonello,Zeidler}, which will be denoted throughout this article by $u$.

\section{Discrete solutions and a posteriori error analysis}\label{S:a posteriori error estimates}

\subsection{Discretization}

In order to define discrete approximations to problem~\eqref{E:cont prob} we will consider \emph{triangulations} of the domain $\Omega$. Let $\Tau_0$ be an
initial conforming triangulation of $\Omega$, that is, a partition
of $\Omega$ into $d$-simplices such that if two elements intersect,
they do so at a full vertex/edge/face of both elements. Let us also assume that the initial mesh $\Tau_0$ is labeled satisfying condition~(b) of Section~4 in Ref.~\cite{Stevenson-refine}. Let $\TT$ denote the set of all conforming triangulations of $\Omega$ obtained from $\Tau_0$ by refinement using the
bisection procedure described by Stevenson~\cite{Stevenson-refine}, which coincides,  (after some re-labeling) with the \textit{newest vertex} bisection procedure in two dimensions and the  Kossaczk\'y's procedure in three
dimensions~\cite{Alberta}.

Due to the processes of refinement used, the family $\TT$ is shape regular, i.e., 
$$\sup_{\Tau\in\TT}\,  \sup_{T \in \Tau} \frac{\diam(T)}{\rho_T} \asdefined \kappa_\TT <\infty,$$
where $\diam(T)$ is the diameter of $T$, and $\rho_T$ is the radius of the largest ball contained in it. Throughout this article, we only consider meshes $\Tau$ that belong to the family $\TT$, so the shape regularity of all of them is bounded by the uniform constant $\kappa_\TT$ which only depends on the initial triangulation $\Tau_0$~\cite{Alberta}. Also, the diameter of any element $T\in\Tau$ is equivalent to the local mesh-size $H_T\definedas|T|^{1/d}$, which in turn defines the global mesh-size $H_\Tau \definedas \D\max_{T\in\Tau} H_T$. Also, the complexity of the refinement can be controlled, as described in Lemma~\ref{L:stevenson} below.

Hereafter, we denote the subset of $\Tau$ consisting of neighbors of $T$ by $\mathcal{N}_\Tau(T)$ and the union of $T$ and its neighbors in $\Tau$ by $\omega_\Tau(T)$. More precisely,
\[
\mathcal{N}_\Tau(T) \definedas \{ T' \in \Tau \mid\, T' \cap T \neq \emptyset \},
\qquad
\omega_\Tau(T) \definedas \bigcup_{T' \in \mathcal{N}_\Tau(T)} T' .
\]

For the discretization we consider the Lagrange finite element spaces consisting of continuous functions vanishing on $\partial \Omega$ which are piecewise linear over a mesh $\Tau\in\TT$, i.e.,
\begin{equation}\label{E:V_Tau}
\VV_{\Tau}\definedas\{V\in H^1_0(\Omega)\mid\quad V_{|_T}\in \PP_1(T),\quad \forall~T\in\Tau\}.
\end{equation}
The discrete problem associated to~\eqref{E:cont prob} consists in finding $U\in \VV_\Tau$ such that
\begin{equation}\label{E:disc prob}
a(U;U,V)=L(V),\qquad\forall\,V\in\VV_\Tau.
\end{equation}
Note that the discrete problem~\eqref{E:disc prob} has a unique solution because $A_{|_{\VV_\Tau}}$ is Lipschitz and strongly monotone (cf.~\eqref{E:A Lipschitz}--\eqref{E:A strongly mon}).

At this point, it is important to remark that the discrete problem~\eqref{E:disc prob} is also \emph{nonlinear}, and for our analysis we will assume that it can be solved exactly in every mesh $\Tau\in\TT$. However, this assumption is usual even though in practice, even for discrete \emph{linear} problems, we \emph{compute} only approximations to the solution of discrete problems. The optimality of inexact methods has been studied for linear problems in~\cite{Stevenson,MZ-optimality-inexact}, and a generalization to nonlinear problems is subject of future work.


\subsection{A posteriori error estimators}

In this section we present the \emph{a posteriori error estimators} for the discrete approximation~\eqref{E:disc prob} of problem~\eqref{E:cont prob} and state results showing their \emph{reliability} and \emph{efficiency}. These estimations will be useful in order to prove the optimality of the AFEM in Section~\ref{S:optimality}.

The \emph{residual} of $V\in\VV_\Tau$ is given by
$$\langle\RRR(V),v\rangle\definedas a(V;V,v)-L(v),\qquad\forall\,v\in H^1_0(\Omega).$$
Integrating by parts on each $T\in\Tau$ we have that
\begin{equation*}
\langle\RRR(V),v\rangle=\sum_{T\in\Tau} \left(\int_T R_\Tau(V)v +\int_{\partial T} J_\Tau(V)v\right),\qquad\forall\,v\in H^1_0(\Omega),
\end{equation*}
where $R_\Tau(V)$ denotes the \emph{element residual} given by 
\begin{equation}\label{E:element-residual}
{R_\Tau(V)}_{|_{T}}\definedas -\nabla \cdot [\alpha(\,\cdot \,,|\nabla V|^2)\nabla V]-f,\qquad\forall\,T\in\Tau,
\end{equation}
and $J_\Tau(V)$ the \emph{jump residual} given by
\begin{equation}\label{E:jump-residual}
{J_\Tau(V)}_{|_{S}}\definedas\frac12\left[(\alpha(\,\cdot \,,|\nabla V|^2)\nabla V)_{|_{T_1}}\cdot\vec{n}_1+(\alpha(\,\cdot \,,|\nabla V|^2)\nabla V)_{|_{T_2}}\cdot\vec{n}_2 \right],
\end{equation}
for each interior side $S$, and ${J_\Tau(V)}_{|_{S}}\definedas 0$, if $S$ is a side lying on the boundary of $\Omega$. Here, $T_1$ and $T_2$ denote the elements of $\Tau$ sharing $S$, and $\vec{n}_1$ and $\vec{n}_1$ are the outward unit normals of $T_1$ and $T_2$ on $S$, respectively.

We define the \emph{local a posteriori error estimator} $\est{V}{T}$ of $V\in\VV_\Tau$ by
\begin{equation}\label{E:local estimator}
\eta_\Tau^2(V;T)\definedas \h_T^2\normT{R_\Tau(V)}^2 + \h_T\normbT{J_\Tau(V)}^2, \quad\forall\,T\in\Tau,
\end{equation}
and the \emph{global error estimator} $\gest{V}$ by
$$
\eta_\Tau^2(V)\definedas \sum_{T\in\Tau} \eta_\Tau^2(V;T).
$$
In general, if $\Xi\subset\Tau$ we denote $\left(\sum_{T\in\Xi} \eta_\Tau^2(V;T)\right)^\frac12$ by $\eta_\Tau(V;\Xi)$.


The next lemma establishes a local lower bound for the error. Its proof follows the usual techniques taking into account that if $u$ denotes the solution of problem~\eqref{E:cont prob},
\begin{equation*}
|\langle\RRR(V),v\rangle|=|a(V;V,v)-L(v)|=|a(V;V,v)-a(u;u,v)|
\le C_A\|\nabla(V-u)\|_{\omega}\|\nabla v\|_{\omega},
\end{equation*}
for $V\in\VV_\Tau$, whenever $v\in H^1_0(\Omega)$ vanishes outside of $\omega$, for any $\omega\subset\overline{\Omega}$.
\begin{lemma}[Local lower bound]\label{L:local lower bound}
Let $u\in H^1_0(\Omega)$ be the solution of problem~\eqref{E:cont prob}. Let $\Tau\in\TT$ and $T\in\Tau$ be fixed. If $V\in\VV_\Tau$,\footnote{From now on, we will write $a \lesssim b$ to indicate that $a \le C b$ with $C>0$ a constant depending on the data of the problem and possibly on shape regularity $\kappa_\TT$ of the meshes. Also $a \simeq b$ will indicate that $a \lesssim b$ and $b \lesssim a$.}
\begin{equation}\label{E:local lower bound}
 \est{V}{T}\lesssim  \|\nabla(V-u)\|_{\omega_\Tau(T)} +\h_T\normNT{R_\Tau(V)-\overline{R_\Tau(V)}}+\h_T^{\frac12}\normbT{J_\Tau(V)-\overline{J_\Tau(V)}},
\end{equation}
where $\overline{R_\Tau(V)}_{|_{T'}}$ denotes the mean value of $R_\Tau(V)$ on $T'$, for all $T'\in\mathcal{N}_\Tau(T)$, and for each side $S\subset\partial T$, $\overline{J_\Tau(V)}_{|_{S}}$ denotes the mean value of $J_\Tau(V)$ on $S$.
\end{lemma}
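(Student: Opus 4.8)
The plan is to run the classical residual–efficiency argument of Verf\"urth based on interior and side bubble functions, the only novelty being that the nonlinearity of~\eqref{E:cont prob} is entirely absorbed into the bound $|\langle\RRR(V),v\rangle|\le C_A\|\nabla(V-u)\|_\omega\,\|\nabla v\|_\omega$ recorded just before the statement, which here plays the role that the Galerkin residual identity plays for the Laplacian. Throughout I abbreviate $R\definedas R_\Tau(V)$, $J\definedas J_\Tau(V)$, $\bar R\definedas\overline{R_\Tau(V)}$, $\bar J\definedas\overline{J_\Tau(V)}$; all hidden constants depend only on the data and on $\kappa_\TT$, via scaling to a reference simplex together with shape regularity.

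First I would treat the \emph{element residual}. Let $b_T$ be the standard interior bubble on $T$ and set $v_T\definedas b_T\,\bar R_{|_T}\in H^1_0(\Omega)$, extended by zero outside $T$; since $v_T$ vanishes on $\partial T$, testing the residual with it gives $\langle\RRR(V),v_T\rangle=\int_T R\,v_T$. Norm equivalence on $\PP_0(T)$ yields $\|\bar R\|_T^2\lesssim\int_T b_T\bar R^2=\langle\RRR(V),v_T\rangle+\int_T(\bar R-R)v_T$; bounding the first term by $C_A\|\nabla(V-u)\|_T\|\nabla v_T\|_T$ with the inverse estimate $\|\nabla v_T\|_T\lesssim\h_T^{-1}\|v_T\|_T\le\h_T^{-1}\|\bar R\|_T$, the second by $\|R-\bar R\|_T\|\bar R\|_T$, dividing by $\|\bar R\|_T$ and applying the triangle inequality, one obtains $\h_T\|R\|_T\lesssim\|\nabla(V-u)\|_T+\h_T\|R-\bar R\|_T$.

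Next, the \emph{jump residual}. Fix a side $S\subset\partial T$ (the case $S\subset\partial\Omega$ being trivial, since then $J=0$) and let $T_S\in\mathcal{N}_\Tau(T)$ be the other element sharing $S$, with $\omega_S\definedas T\cup T_S$. Taking $v_S\definedas b_S\,P\in H^1_0(\Omega)$, where $b_S$ is the side bubble supported on $\omega_S$ and $P$ a constant-preserving extension of $\bar J_{|_S}$ to $\omega_S$, integration by parts gives $\langle\RRR(V),v_S\rangle=\int_T R\,v_S+\int_{T_S}R\,v_S+2\int_S J\,v_S$ (the factor $2$ coming from the symmetrized definition of $J$, both $\int_{\partial T}J v_S$ and $\int_{\partial T_S}J v_S$ reducing to $\int_S J v_S$), so that $\int_S J\,v_S=\tfrac12\big(\langle\RRR(V),v_S\rangle-\int_T R\,v_S-\int_{T_S}R\,v_S\big)$. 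Norm equivalence on $\PP_0(S)$ gives $\|\bar J\|_S^2\lesssim\int_S b_S\bar J^2=\int_S J\,v_S+\int_S(\bar J-J)v_S$; I would bound $|\langle\RRR(V),v_S\rangle|\le C_A\|\nabla(V-u)\|_{\omega_S}\|\nabla v_S\|_{\omega_S}$ with $\|\nabla v_S\|_{\omega_S}\lesssim\h_T^{-1/2}\|\bar J\|_S$, estimate $|\int_{T'}R\,v_S|\lesssim\h_T^{1/2}\|\bar J\|_S\|R\|_{T'}$ for $T'\in\{T,T_S\}$ and then control $\h_{T'}\|R\|_{T'}$ by the element–residual bound already proved (using $\h_{T'}\simeq\h_T$ for neighbors), and bound the last term by $\|J-\bar J\|_S\|\bar J\|_S$. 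Dividing by $\|\bar J\|_S$ this gives $\h_T^{1/2}\|J\|_S\lesssim\|\nabla(V-u)\|_{\omega_S}+\h_T\|R-\bar R\|_{\omega_S}+\h_T^{1/2}\|J-\bar J\|_S$. Finally, since $\eta_\Tau^2(V;T)=\h_T^2\|R\|_T^2+\h_T\|J\|_{\partial T}^2$, I would square and sum the two bounds over $T$ and over the sides $S\subset\partial T$ and use $T\cup\bigcup_{S\subset\partial T}\omega_S\subset\omega_\Tau(T)$ to arrive at $\est{V}{T}\lesssim\|\nabla(V-u)\|_{\omega_\Tau(T)}+\h_T\normNT{R-\bar R}+\h_T^{1/2}\normbT{J-\bar J}$, which is~\eqref{E:local lower bound}.

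I do not expect a genuinely hard step: the scheme is standard and the nonlinearity is already packaged into $C_A$. The only points needing care are keeping all constants dependent on $\kappa_\TT$ alone in the reference-element estimates, and noticing that — unlike for the Poisson problem with piecewise linears — neither $R$ nor $J$ is piecewise polynomial here (because $\alpha(\cdot,|\nabla V|^2)$ is $x$-dependent even though $\nabla V$ is elementwise constant), which is exactly why one must work with the $\PP_0$-averages $\bar R,\bar J$ and why the oscillation terms $\h_T\|R-\bar R\|$ and $\h_T^{1/2}\|J-\bar J\|$ appear on the right-hand side.
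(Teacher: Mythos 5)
Your proposal is correct and follows exactly the route the paper sketches: the paper supplies only the key residual bound $|\langle\RRR(V),v\rangle|\le C_A\|\nabla(V-u)\|_\omega\|\nabla v\|_\omega$ and defers the rest to ``the usual techniques,'' and what you have written out is precisely that standard Verf\"urth interior/side bubble-function argument with the averages $\overline{R_\Tau(V)}$, $\overline{J_\Tau(V)}$ in place of the (nonexistent here) piecewise-polynomial residuals. The factor of two in the jump identity, the inverse estimates, the neighbour size comparison $\h_{T'}\simeq\h_T$, and the final summation over $S\subset\partial T$ with $T\cup\bigcup_{S\subset\partial T}\omega_S\subset\omega_\Tau(T)$ are all handled correctly, so there is nothing to add.
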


The last result is known as \emph{local efficiency of the error estimator}. According to the lemma, if a local estimator is large, then so is the corresponding local error, provided the last two terms in the right-hand side of~\eqref{E:local lower bound} are relatively small.

We define the \emph{local oscillation} corresponding to $V\in\VV_\Tau$ by
\begin{equation*}
\osci_\Tau^2(V;T)\definedas \h_T^2\normT{R_\Tau(V)-\overline{R_\Tau(V)}}^2+\h_T\normbT{J_\Tau(V)-\overline{J_\Tau(V)}}^2, \quad\forall\,T\in\Tau,
\end{equation*}
and the \emph{global oscillation} by
$$
\osci_\Tau^2(V)\definedas \sum_{T\in\Tau} \osci_\Tau^2(V;T).
$$
In general, if $\Xi\subset\Tau$ we denote $\left(\sum_{T\in\Xi} \osci_\Tau^2(V;T)\right)^\frac12$ by $\osci_\Tau(V;\Xi)$.

As an immediate consequence of the last lemma, adding over all elements in the mesh we obtain the following

\begin{theorem}[Global lower bound]\label{T:cota inferior global}
Let $u\in H^1_0(\Omega)$ denote the solution of problem~\eqref{E:cont prob}. Then, there exists a constant $C_L=C_L(d,\kappa_\TT,C_A)>0$ such that 
$$C_L\eta_\Tau^2(V)\le \|\nabla (V-u)\|_\Omega^2+\osci_\Tau^2(V),\qquad\forall\, V\in\VV_\Tau,\quad\forall\,\Tau\in\TT.$$
\end{theorem}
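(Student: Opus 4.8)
The plan is to obtain the global bound by squaring the local efficiency estimate of Lemma~\ref{L:local lower bound} and summing it over all $T\in\Tau$, controlling the overlap of the patches $\omega_\Tau(T)$ by a constant depending only on $d$ and $\kappa_\TT$. Fix $V\in\VV_\Tau$ and $\Tau\in\TT$. Squaring the estimate of Lemma~\ref{L:local lower bound} and using $(a+b+c)^2\le 3(a^2+b^2+c^2)$ gives, for each $T\in\Tau$,
$$\est{V}{T}^2\lesssim \|\nabla(V-u)\|_{\omega_\Tau(T)}^2+\h_T^2\normNT{R_\Tau(V)-\overline{R_\Tau(V)}}^2+\h_T\normbT{J_\Tau(V)-\overline{J_\Tau(V)}}^2,$$
and it remains to sum this over $T$ and identify the right-hand side with a constant times $\|\nabla(V-u)\|_\Omega^2+\osci_\Tau^2(V)$.

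I would treat the three sums separately. Since $T'\in\omega_\Tau(T)\iff T\cap T'\neq\emptyset$, shape regularity bounds $\#\mathcal{N}_\Tau(T)$ by a constant $N=N(d,\kappa_\TT)$, so every element is counted at most $N$ times and $\sum_{T\in\Tau}\|\nabla(V-u)\|_{\omega_\Tau(T)}^2\le N\|\nabla(V-u)\|_\Omega^2$. For the second sum I would write $\normNT{\cdot}^2=\sum_{T'\in\mathcal{N}_\Tau(T)}\|\cdot\|_{T'}^2$ and use that adjacent elements produced by bisection have comparable size, $\h_T\simeq\h_{T'}$ when $T\cap T'\neq\emptyset$; then $\h_T^2\|R_\Tau(V)-\overline{R_\Tau(V)}\|_{T'}^2\lesssim\h_{T'}^2\|R_\Tau(V)-\overline{R_\Tau(V)}\|_{T'}^2\le\osci_\Tau^2(V;T')$, and summing over $T$ with the same finite overlap bounds this sum by $N\,\osci_\Tau^2(V)$. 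The third sum is handled the same way after writing $\normbT{\cdot}^2=\sum_{S\subset\partial T}\|\cdot\|_S^2$ and using that each side is shared by at most two elements of comparable size; it too is bounded by a constant times $\osci_\Tau^2(V)$.

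Collecting the three estimates yields $\eta_\Tau^2(V)=\sum_{T\in\Tau}\est{V}{T}^2\lesssim\|\nabla(V-u)\|_\Omega^2+\osci_\Tau^2(V)$, with a hidden constant depending only on $d$, $\kappa_\TT$ and $C_A$ — the latter entering through the constant in Lemma~\ref{L:local lower bound} — so the claim follows with $C_L$ equal to the reciprocal of that constant. No step is genuinely hard; the only point needing care is the bookkeeping of the overlap constants together with the fact that one may move the mesh-size factor between adjacent elements, both of which rest on the shape regularity of $\TT$ inherited from the bisection refinement.
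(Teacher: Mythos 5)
Your proof is correct and is exactly the argument the paper alludes to but does not spell out: the paper introduces Theorem~\ref{T:cota inferior global} with the phrase ``as an immediate consequence of the last lemma, adding over all elements in the mesh,'' i.e.\ precisely by squaring Lemma~\ref{L:local lower bound}, summing over $T\in\Tau$, and absorbing the finite overlap of the patches $\omega_\Tau(T)$ and the comparability $\h_T\simeq\h_{T'}$ for neighbouring elements into the constant via shape regularity. One small simplification you could make: the jump term in~\eqref{E:local lower bound} is already supported on $\partial T$, so after squaring it is literally a summand of $\osci_\Tau^2(V;T)$ and summing over $T$ gives $\le\osci_\Tau^2(V)$ directly, with no need for the side-sharing/overlap argument you sketch for the third sum — that extra care is only needed for the residual term over the patch $\omega_\Tau(T)$.
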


We conclude this section with two estimations for the error, whose proofs are strongly based on the analogous results for linear elliptic problems (cf.~\cite{CKNS-quasi-opt}).

\begin{theorem}[Global upper bound]\label{T:global upper bound}
Let $u\in H^1_0(\Omega)$ be the solution of problem~\eqref{E:cont prob}. Let $\Tau\in\TT$ and let $U\in\VV_\Tau$ be the solution of the discrete problem~\eqref{E:disc prob}. Then, there exists $C_U=C_U(d,\kappa_\TT,C_a,c_a,c_A)>0$ such that
\begin{equation}\label{E:global upper bound}
\|\nabla (U-u)\|_\Omega^2\le C_U \eta_\Tau^2(U).
\end{equation}
\end{theorem}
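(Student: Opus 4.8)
The plan is to follow the classical proof of reliability of the residual estimator, replacing the coercivity of the bilinear form — which is no longer available, since $a(\cdot;\cdot,\cdot)$ is nonlinear in its first argument — by the strong monotonicity~\eqref{E:A strongly mon} of $A$. Set $e\definedas u-U\in H^1_0(\Omega)$. Since $\langle Au,v\rangle=a(u;u,v)=L(v)$ for all $v\in H^1_0(\Omega)$, monotonicity gives
\begin{equation*}
c_A\norm{\nabla e}^2\le\langle Au-AU,e\rangle=L(e)-a(U;U,e)=-\langle\RRR(U),e\rangle,
\end{equation*}
so it suffices to bound $|\langle\RRR(U),e\rangle|$ by a constant times $\gest{U}\,\norm{\nabla e}$.

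Next I would exploit the Galerkin orthogonality for the residual: testing the discrete problem~\eqref{E:disc prob} against an arbitrary $V\in\VV_\Tau$ yields $\langle\RRR(U),V\rangle=0$. Hence, for any quasi-interpolation operator $I_\Tau\colon H^1_0(\Omega)\to\VV_\Tau$ of Scott--Zhang (or Cl\'ement) type, $\langle\RRR(U),e\rangle=\langle\RRR(U),e-I_\Tau e\rangle$. Using the elementwise integration-by-parts representation
\begin{equation*}
\langle\RRR(U),e-I_\Tau e\rangle=\sum_{T\in\Tau}\Big(\int_T R_\Tau(U)\,(e-I_\Tau e)+\int_{\partial T} J_\Tau(U)\,(e-I_\Tau e)\Big)
\end{equation*}
and the Cauchy--Schwarz inequality on each element and each side, together with the standard local approximation properties $\normT{e-I_\Tau e}\lesssim\h_T\,\normNT{\nabla e}$ and $\normbT{e-I_\Tau e}\lesssim\h_T^{1/2}\,\normNT{\nabla e}$ valid on the shape-regular family $\TT$, and recalling the definition~\eqref{E:local estimator} of $\est{U}{T}$, I obtain
\begin{equation*}
|\langle\RRR(U),e\rangle|\lesssim\sum_{T\in\Tau}\est{U}{T}\,\normNT{\nabla e}\le\gest{U}\Big(\sum_{T\in\Tau}\normNT{\nabla e}^2\Big)^{1/2}.
\end{equation*}
By shape regularity the patches $\omega_\Tau(T)$ overlap a bounded number of times, so the last sum is $\lesssim\norm{\nabla e}^2$, whence $|\langle\RRR(U),e\rangle|\lesssim\gest{U}\,\norm{\nabla e}$. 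Combining with the first display and dividing by $\norm{\nabla e}$ (the case $U=u$ being trivial) gives~\eqref{E:global upper bound}; tracking constants, $C_U$ depends only on $d$, $\kappa_\TT$ and $c_A$, hence — through $c_A$ and through the bounds~\eqref{E:alpha bounded} used to control the residuals — on $c_a$ and $C_a$.

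As a preliminary remark, $R_\Tau(U)$ and $J_\Tau(U)$ are well defined: since $U$ is piecewise affine, $\nabla U$ and hence $|\nabla U|^2$ are piecewise constant, so $\alpha(\,\cdot\,,|\nabla U|^2)\nabla U$ is a piecewise flux to which the elementwise divergence and the interelement jump in~\eqref{E:element-residual}--\eqref{E:jump-residual} apply verbatim as in the linear case. In fact there is no genuinely nonlinear obstacle in this theorem — which is precisely why it runs parallel to~\cite{CKNS-quasi-opt}: the only substantive change from the linear argument is the opening step, where $a(U;e,e)\ge c_a\norm{\nabla e}^2$ is unavailable and is replaced by $\langle Au-AU,e\rangle\ge c_A\norm{\nabla e}^2$; after that substitution the estimates are identical to the linear ones, and the sharper constant bookkeeping (replacing the quasi-interpolation error estimates by the oscillation refinement, as in CKNS) is routine.
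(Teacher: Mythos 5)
Your proof is correct, but it takes a genuinely different route from the paper's. The paper does not rerun the residual-estimator machinery at all: it introduces the auxiliary \emph{linear} elliptic problem of finding $w\in H^1_0(\Omega)$ with $a(U;w,v)=L(v)$ for all $v\in H^1_0(\Omega)$, observes that $U$ is exactly the $\VV_\Tau$-Galerkin solution of this linearized problem (so the linear residual estimator at $U$ coincides with $\eta_\Tau(U)$), and then uses strong monotonicity of $A$ together with boundedness of $a$ to get $\|\nabla(U-u)\|_\Omega\le(C_a/c_A)\,\|\nabla(U-w)\|_\Omega$; reliability then follows by quoting the linear result of Casc\'on et al.\ (Lemma~2.2 of \cite{CKNS-quasi-opt}). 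Your approach is more self-contained: you exploit that $\langle\RRR(U),V\rangle=0$ for all $V\in\VV_\Tau$ — which is bona fide Galerkin orthogonality of the residual, valid here even though the Pythagorean identity is not — and then run the standard quasi-interpolation argument directly, invoking monotonicity only once at the start to convert $\|\nabla(u-U)\|_\Omega^2$ into a pairing $-\langle\RRR(U),u-U\rangle$. What each buys: the paper's route is shorter on the page and makes the structural link to the linear theory transparent (which it then reuses verbatim in the localized upper bound), while your route avoids the detour through $w$ and yields a constant depending visibly only on $d$, $\kappa_\TT$ and $c_A$ rather than on the full triple $(C_a,c_a,c_A)$. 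One small inaccuracy worth fixing in your last sentence: the bounds \eqref{E:alpha bounded} are not actually needed to ``control the residuals'' in this argument, since the residual terms are absorbed wholesale into the estimator; the only problem-dependent constant your proof genuinely uses is $c_A$.
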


\begin{proof}
Let $u\in H^1_0(\Omega)$ be the solution of problem~\eqref{E:cont prob}. Let $\Tau\in\TT$ and let  $U\in\VV_\Tau$ be the solution of the discrete problem~\eqref{E:disc prob}. Let $w\in H^1_0(\Omega)$ be the solution of the \emph{linear} elliptic problem
\begin{equation}\label{E:cota superior aux}
a(U;w,v)=L(v),\qquad\forall\,v\in H^1_0(\Omega).
\end{equation}
Since $A$ is strongly monotone (cf.~\eqref{E:A strongly mon}), using that $u$ is the solution of problem~\eqref{E:cont prob},~\eqref{E:cota superior aux}, and that $a$ is bounded (cf.~\eqref{E:a bounded}), we have that
\begin{align*}
c_A\|\nabla (U-u)\|_\Omega^2&\le \langle AU -Au, U-u\rangle= a(U;U,U-u)-a(u;u,U-u)\\
&=a(U;U,U-u)-a(U;w,U-u)=a(U;U-w,U-u)\\
&\le C_a\|\nabla(U-w)\|_\Omega\|\nabla (U-u)\|_\Omega,
\end{align*}
and thus,
$$\|\nabla (U-u)\|_\Omega\le\frac{C_a}{c_A}\|\nabla(U-w)\|_\Omega.$$
Since $U$ is solution of the Galerkin discretization of the linear elliptic problem~\eqref{E:cota superior aux} in $\VV_\Tau$ (see~\eqref{E:disc prob}), using the \emph{reliability} of the global error estimator for linear problems (cf.~\cite[Lemma 2.2]{CKNS-quasi-opt}), it follows that there exists $C_U=C_U(d,\kappa_\TT,C_a,c_a,c_A)>0$ such that~\eqref{E:global upper bound} holds.
\end{proof}

\begin{theorem}[Localized upper bound]\label{T:localized upper bound}
Let $\Tau\in\TT$ and let $\Taustar\in\TT$ be a refinement of $\Tau$. Let $\mathcal{R}$ denote the subset of $\Tau$ consisting of the elements which are refined to obtain $\Taustar$, that is, $\mathcal{R}\definedas\{T\in\Tau\mid T\not\in\Taustar\}$. Let $U\in\VV_\Tau$ and $U_*\in\VV_\Taustar$ be the solutions of the discrete problem~\eqref{E:disc prob} in $\VV_\Tau$ and $\VV_\Taustar$, respectively. Then, there exists a constant $C_{LU}=C_{LU}(d,\kappa_\TT,C_a,c_a,c_A)>0$ such that
\begin{equation}\label{E:localized upper bound}
\|\nabla(U-U_*)\|_\Omega^2\le C_{LU} \eta_\Tau^2(U;\mathcal{R}).
\end{equation}
\end{theorem}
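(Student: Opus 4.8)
The plan is to mimic the strategy used to prove the global upper bound (Theorem~\ref{T:global upper bound}), but replacing the exact continuous solution $u$ by the discrete solution $U_*\in\VV_\Taustar$ and introducing an auxiliary \emph{linear} problem whose coefficient is frozen at $\nabla U$. Concretely, let $w_*\in\VV_\Taustar$ be the solution of the linear discrete problem
\begin{equation*}
a(U;w_*,V)=L(V),\qquad\forall\,V\in\VV_\Taustar.
\end{equation*}
First I would compare $U$ and $U_*$ via strong monotonicity and boundedness of $a$: since $U$ solves~\eqref{E:disc prob} in $\VV_\Tau\subset\VV_\Taustar$ and $U_*$ solves it in $\VV_\Taustar$, and $U-U_*\in\VV_\Taustar$ is an admissible test function, one has
\begin{align*}
c_A\|\nabla(U-U_*)\|_\Omega^2&\le\langle AU-AU_*,U-U_*\rangle=a(U;U,U-U_*)-a(U_*;U_*,U-U_*)\\
&=a(U;U,U-U_*)-a(U;w_*,U-U_*)=a(U;U-w_*,U-U_*)\\
&\le C_a\|\nabla(U-w_*)\|_\Omega\|\nabla(U-U_*)\|_\Omega,
\end{align*}
where I used $a(U_*;U_*,U-U_*)=L(U-U_*)=a(U;w_*,U-U_*)$. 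Hence $\|\nabla(U-U_*)\|_\Omega\le\frac{C_a}{c_A}\|\nabla(U-w_*)\|_\Omega$, and it remains to bound $\|\nabla(U-w_*)\|_\Omega$.

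For that second step I observe that, with respect to the \emph{linear} bilinear form $b(\cdot,\cdot)\definedas a(U;\cdot,\cdot)$ (which is bounded and coercive with constants $C_a$, $c_a$ by~\eqref{E:alpha bounded}), $U\in\VV_\Tau$ is precisely the Galerkin solution of $b(U,V)=L(V)$ on $\VV_\Tau$ (this is~\eqref{E:disc prob} again), and $w_*\in\VV_\Taustar$ is the Galerkin solution of the same linear problem on the finer space $\VV_\Taustar$. Therefore $\|\nabla(U-w_*)\|_\Omega$ is exactly the quantity controlled by the \emph{localized upper bound for linear elliptic problems} proved in~\cite{CKNS-quasi-opt} (the discrete localized reliability estimate, \cite[Lemma~3.6]{CKNS-quasi-opt} or the analogous statement there): the Galerkin error between the coarse and fine discrete solutions is bounded by the estimator restricted to the refined set $\mathcal{R}$, with a constant depending only on $d$, $\kappa_\TT$, and the ellipticity constants $C_a,c_a$. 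The key point making this applicable is that the residual and jump of the \emph{linear} problem $b(U,\cdot)=L$ at the discrete function $U$ coincide exactly with $R_\Tau(U)$ and $J_\Tau(U)$ from~\eqref{E:element-residual}--\eqref{E:jump-residual}, because $a(U;U,v)=b(U,v)$; consequently the linear estimator at $U$ equals $\eta_\Tau(U;\cdot)$. Combining this with the inequality from the first step yields~\eqref{E:localized upper bound} with $C_{LU}=\left(\tfrac{C_a}{c_A}\right)^2\cdot C$, where $C$ is the constant from the linear localized bound.

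The main obstacle, and the reason this is not entirely routine, is ensuring that the ``frozen coefficient'' linear problem genuinely reduces the nonlinear estimate to the linear one \emph{without any mismatch in the residuals}: one must check that freezing $\alpha$ at $|\nabla U|^2$ leaves the element and jump residuals of $U$ unchanged, so that the linear estimator $\eta_\Tau^b(U;\mathcal{R})$ is literally $\eta_\Tau(U;\mathcal{R})$ and not merely comparable to it. Once that identification is in place, and provided the cited localized upper bound from~\cite{CKNS-quasi-opt} is stated for a general bounded coercive bilinear form (or, if it is stated only for the Laplacian, provided one re-runs its short proof — which uses only an inverse inequality, a discrete cutoff supported on $\omega_\Tau(\mathcal{R})$, and coercivity — with $b$ in place of the Laplacian), the argument closes with uniform constants as claimed.
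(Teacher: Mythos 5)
Your proposal is correct and follows essentially the same route as the paper: introduce the auxiliary discrete linear problem with coefficient frozen at $\nabla U$ (your $w_*$ is the paper's $W_*$), use strong monotonicity and boundedness of $a$ to reduce $\|\nabla(U-U_*)\|_\Omega$ to $\|\nabla(U-w_*)\|_\Omega$, and then invoke the localized upper bound for linear problems from~\cite[Lemma 3.6]{CKNS-quasi-opt} since $U$ and $w_*$ are Galerkin solutions of the same linear problem on nested spaces. Your explicit verification that the element and jump residuals of the frozen-coefficient linear problem coincide exactly with $R_\Tau(U)$ and $J_\Tau(U)$ is a worthwhile remark that the paper leaves implicit.
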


\begin{proof}
Let $\Tau$, $\Taustar$, $\mathcal{R}$, $U$ and $U_*$ be as in the assumptions of the theorem. Let $W_*\in \VV_\Taustar$ be the solution of the discrete linear elliptic problem
\begin{equation}\label{E:cota superior local aux}
a(U;W_*,V_*)=L(V_*),\qquad\forall\,V_*\in\VV_\Taustar.
\end{equation}
Analogously to the last proof, using that $A$ is strongly monotone, that $U_*$ is the solution of problem~\eqref{E:disc prob} in $\VV_\Taustar$,~\eqref{E:cota superior local aux} and that $a$ is bounded, we have that
\begin{align*}
c_A\|\nabla(U-U_*)\|_\Omega^2&\le \langle AU -AU_*, U-U_*\rangle= a(U;U,U-U_*)-a(U_*;U_*,U-U_*)\\
&=a(U;U,U-U_*)-a(U;W_*,U-U_*)=a(U;U-W_*,U-U_*)\\
&\le C_a\|\nabla(U-W_*)\|_\Omega\|\nabla(U-U_*)\|_\Omega,
\end{align*}
and therefore,
$$\|\nabla(U-U_*)\|_\Omega\le\frac{C_a}{c_A}\|\nabla(U-W_*)\|_\Omega.$$
Finally, since $U$ and $W_*$ are the solutions of the Galerkin discretization of the linear elliptic problem~\eqref{E:cota superior aux} in $\VV_\Tau$ and $\VV_\Taustar$, respectively (cf.~\eqref{E:disc prob} and~\eqref{E:cota superior local aux}), using the \emph{localized upper bound} for linear problems (cf.~\cite[Lemma 3.6]{CKNS-quasi-opt}), it follows that there exists $C_{LU}=C_{LU}(d,\kappa_\TT,C_a,c_a,c_A)>0$ such that~\eqref{E:localized upper bound} holds.
\end{proof}


\subsection{Estimator reduction and perturbation of oscillation}\label{S:reducciones nolineal}

In order to prove the contraction property it is necessary to study the effects that refinement has upon the error estimators and oscillation terms. We thus present two main results in this section. The first one is related to the error estimator and it will be used in Theorem~\ref{T:propiedad de contraccion}.

\begin{proposition}[Estimator reduction]\label{P:reduccion del estimador}
Let $\Tau\in\TT$ and let $\MM$ be any subset of $\Tau$. Let $\Taustar\in\TT$ be obtained from $\Tau$ by bisecting at least $\nref\ge 1$ times each element in $\MM$. 
If $V\in\VV_\Tau$ and $V_*\in\VV_\Taustar$, then
\begin{equation*}
\eta_\Taustar^2(V_*) \leq (1+\delta )\left\{ \eta_\Tau^2(V)-(1-2^{-\frac{\nref}{d}})\eta_\Tau^2(V;\MM)\right\}+(1+\delta ^{-1})C_E\|\nabla(V_*-V)\|_\Omega^2,
\end{equation*}
for all $\delta >0$, where $C_E>1$ is a constant (cf. Lemma~\ref{L:bound for gtau} below).
\end{proposition}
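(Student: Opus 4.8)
The plan is to follow the classical estimator-reduction argument (as in Cascón–Kreuzer–Nochetto–Siebert) adapted to the nonlinear setting. The first step is to split the global estimator on $\Taustar$ into the contribution from elements that are \emph{not} refined and those that \emph{are}. For a fixed element $T_*\in\Taustar$, I would use the triangle inequality on the $L^2$-norms appearing in $\eta_{\Taustar}(V_*;T_*)$ together with the elementary inequality $(a+b)^2\le(1+\delta)a^2+(1+\delta^{-1})b^2$ to obtain
\begin{equation*}
\eta_{\Taustar}^2(V_*;T_*)\le(1+\delta)\,\eta_{\Taustar}^2(V;T_*)+(1+\delta^{-1})\,g_{\Taustar}^2(V,V_*;T_*),
\end{equation*}
where $g_{\Taustar}(V,V_*;T_*)$ is the quantity measuring the difference of estimators between the two discrete functions (the one referred to in~\eqref{E:gtau}); here I use that $V\in\VV_\Tau\subset\VV_\Taustar$, so $\eta_{\Taustar}(V;T_*)$ makes sense.

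Next I would sum over $T_*\in\Taustar$. For the first term, I split $\Taustar=\Taustar^{\text{ref}}\cup\Taustar^{\text{unref}}$ according to whether $T_*$ sits inside a refined element of $\MM$ or coincides with an unrefined element of $\Tau$. On the unrefined part, $\sum_{T_*\in\Taustar^{\text{unref}}}\eta_{\Taustar}^2(V;T_*)=\sum_{T\in\Tau\setminus\MM'}\eta_{\Tau}^2(V;T)$ where $\MM'\supseteq\MM$ collects the elements that have been refined; on the refined part, the key gain comes from the mesh-size: each element of $\MM$ is bisected at least $\nref$ times, so $H_{T_*}\le 2^{-\nref/d}H_T$ for the children $T_*$, while the $L^2$-norms of the residual data of $V$ and the jump contributions do not increase when passing to a finer mesh (for the jump terms one also uses that new interior sides created inside a refined $T$ carry no jump of the $\Tau$-piecewise-polynomial flux of $V$, since $V$ is smooth there). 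This yields
\begin{equation*}
\sum_{T_*\in\Taustar}\eta_{\Taustar}^2(V;T_*)\le \eta_{\Tau}^2(V)-\bigl(1-2^{-\nref/d}\bigr)\eta_{\Tau}^2(V;\MM).
\end{equation*}
For the second term, I would invoke Lemma~\ref{L:bound for gtau}, which gives $\sum_{T_*\in\Taustar} g_{\Taustar}^2(V,V_*;T_*)\le C_E\|\nabla(V_*-V)\|_\Omega^2$ with $C_E>1$. Combining the two bounds gives the claimed inequality.

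The main obstacle is precisely the term $g_{\Taustar}(V,V_*;\cdot)$: in the linear case the difference of residuals between $V$ and $V_*$ is controlled by inverse estimates and trace inequalities applied to $\nabla(V-V_*)$ directly, but here the element and jump residuals depend \emph{nonlinearly} on the gradient through $\alpha(\cdot,|\nabla V|^2)\nabla V$, so one must control $\|\alpha(\cdot,|\nabla V|^2)\nabla V-\alpha(\cdot,|\nabla V_*|^2)\nabla V_*\|$ in terms of $\|\nabla(V-V_*)\|$. This is exactly where the structural hypothesis~\eqref{E:main cond} on $\beta$ (Lipschitz continuity of $\nabla_2\gamma$, cf.~\eqref{E:gradiente de gamma}) is used, and it is the content of Lemma~\ref{L:bound for gtau}; in the present proof I would simply cite that lemma. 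Everything else — the algebraic $(1+\delta)$ splitting, the mesh-size contraction factor $2^{-\nref/d}$, and the bookkeeping between $\Tau$ and $\Taustar$ — is routine and identical in spirit to the linear argument.
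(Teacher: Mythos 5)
Your proposal follows exactly the route the paper takes: apply the triangle inequality on the finer mesh to get $\eta_{\Taustar}(V_*;T_*)\le\eta_{\Taustar}(V;T_*)+g_{\Taustar}(V,V_*;T_*)$ (the paper's~\eqref{E:est-comparison}), Young the result, handle the $\eta_\Taustar(V;\cdot)$ term via the mesh-size contraction $H_{T_*}\le 2^{-\nref/d}H_T$ and vanishing jumps on new interior sides as in Cascón et al.\ (Corollary~3.4), and control the $g_\Taustar$ term with Lemma~\ref{L:bound for gtau}. The paper deliberately omits the routine bookkeeping and proves only the nonlinear ingredient (Lemma~\ref{L:bound for gtau}); you fill in that bookkeeping correctly and identify the same lemma as the crux, so the two proofs coincide.
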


The second result is related to the oscillation terms. It will be used to establish the \emph{quasi-optimality for the error} (see Lemma~\ref{L:generalized Cea}) and to prove Lemma~\ref{L:optimal marking} in the next section. 

\begin{proposition}[Oscillation perturbation]\label{P:perturbacion de la oscilacion}
Let $\Tau\in\TT$ and let $\Taustar\in\TT$ be a refinement of $\Tau$. If $V\in\VV_\Tau$ and $V_*\in\VV_\Taustar$, then
\begin{equation*}
\osci_\Tau^2(V;\Tau\cap\Taustar) \leq 2\osci_\Taustar^2(V_*;\Tau\cap\Taustar)+2C_E\|\nabla(V_*-V)\|_\Omega^2,
\end{equation*}
where $C_E>1$ is a constant (cf. Lemma~\ref{L:bound for gtau} below).
\end{proposition}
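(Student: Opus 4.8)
The plan is to transport both quantities to the common refinement $\Taustar$, exploiting that the finite element spaces are nested, $\VV_\Tau\subseteq\VV_\Taustar$, and that every element of $\Tau\cap\Taustar$ is left untouched by the refinement.

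First I would observe that, since $\Taustar$ is a \emph{conforming} refinement of $\Tau$, an element $T\in\Tau\cap\Taustar$ keeps exactly the same sides in $\Tau$ and in $\Taustar$: subdividing a side of an unrefined $T$ would create a hanging node on $\partial T$, which conformity forbids. Combined with $\VV_\Tau\subseteq\VV_\Taustar$ and the fact that $\nabla V$ is constant on each $\Tau$-element, hence on each of its $\Taustar$-children, this gives $R_\Tau(V)|_T=R_\Taustar(V)|_T$ and $J_\Tau(V)|_{\partial T}=J_\Taustar(V)|_{\partial T}$ for every $V\in\VV_\Tau$ and every $T\in\Tau\cap\Taustar$; the corresponding mean values then coincide as well, so that
\[
\osci_\Tau^2(V;\Tau\cap\Taustar)=\osci_\Taustar^2(V;\Tau\cap\Taustar).
\]

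Next I would compare $\osci_\Taustar(V;T)$ with $\osci_\Taustar(V_*;T)$ on the \emph{fixed} mesh $\Taustar$, where now both $V$ and $V_*$ belong to $\VV_\Taustar$. In the differences $R_\Taustar(V)-R_\Taustar(V_*)$ and $J_\Taustar(V)-J_\Taustar(V_*)$ the data term $f$ cancels, only the fluxes $\alpha(\,\cdot\,,|\nabla V|^2)\nabla V$ and $\alpha(\,\cdot\,,|\nabla V_*|^2)\nabla V_*$ remaining; since, moreover, the element and side mean-value operators are linear and $L^2$-contractions, the reverse triangle inequality for the oscillation seminorm (applied to the residual--jump pair) yields
\[
\osci_\Taustar(V;T)\le \osci_\Taustar(V_*;T)+g_\Taustar(V,V_*;T),
\]
with $g_\Taustar(V,V_*;T)$ the local quantity of~\eqref{E:gtau}. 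Squaring, using $(a+b)^2\le 2a^2+2b^2$, summing over $T\in\Tau\cap\Taustar$ and enlarging the last sum to all of $\Taustar$, I would get
\[
\osci_\Taustar^2(V;\Tau\cap\Taustar)\le 2\,\osci_\Taustar^2(V_*;\Tau\cap\Taustar)+2\,g_\Taustar^2(V,V_*;\Taustar),
\]
and Lemma~\ref{L:bound for gtau} bounds $g_\Taustar^2(V,V_*;\Taustar)\le C_E\|\nabla(V_*-V)\|_\Omega^2$. Chaining this with the identity of the first step gives the claim.

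Within this argument almost everything is bookkeeping; the only substantial ingredient is Lemma~\ref{L:bound for gtau} itself (which rests on inverse and trace inequalities and on the structural hypotheses on $\alpha$), and that lemma may be assumed here. The point that deserves care is the geometric observation that unrefined elements retain their sides --- this is precisely where conformity of the family $\TT$ enters --- together with the companion observation that the relevant mean values also match, so that passing from $\Tau$ to $\Taustar$ produces \emph{no} perturbation for the fixed function $V$, and all of the perturbation is concentrated in the transition from $V$ to $V_*$, controlled by $g_\Taustar$ via Lemma~\ref{L:bound for gtau}.
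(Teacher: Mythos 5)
Your proof is correct and follows exactly the route the paper intends: the paper delegates the mesh-perturbation bookkeeping to~\cite[Cor.~3.5]{CKNS-quasi-opt} and supplies only the new nonlinear ingredient, Lemma~\ref{L:bound for gtau}, which is precisely how you close the argument. Your explicit steps --- that an unrefined $T\in\Tau\cap\Taustar$ retains its sides by conformity so $\osci_\Tau(V;T)=\osci_\Taustar(V;T)$, then the comparison~\eqref{E:osc-comparison} in $\Taustar$ with both $V,V_*\in\VV_\Taustar$, Young's inequality, enlarging the sum to all of $\Taustar$, and finally~\eqref{E:bound for gtau} --- reconstruct the CKNS argument faithfully, with the sole caveat that what you call a ``reverse triangle inequality'' is just the ordinary triangle inequality combined with the $L^2$-contraction property of the mean-value projection, as you in fact use.
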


In order to prove Propositions~\ref{P:reduccion del estimador} and~\ref{P:perturbacion de la oscilacion} we observe that if we define for $\Tau\in\TT$ and $V,W\in\VV_\Tau$
\begin{equation}\label{E:gtau}
g_\Tau(V,W;T)\definedas \h_T\normT{R_\Tau(V)-R_\Tau(W)} +
\h_T^{\frac12}\normbT{J_\Tau(V)-J_\Tau(W)},
\end{equation}
then from the definition of the local error estimators~\eqref{E:local estimator} and the triangle inequality it follows that
\begin{equation}\label{E:est-comparison}
\est{W}{T}\leq  \est{V}{T}+g_\Tau(V,W;T),\qquad\forall\,T\in\Tau,
\end{equation}
and analogously
\begin{equation}\label{E:osc-comparison}
\osc{W}{T}\leq  \osc{V}{T}+g_\Tau(V,W;T),\qquad\forall\,T\in\Tau.
\end{equation}
After proving that $g_\Tau(V,W;T)$ is bounded by $\|\nabla(V-W)\|_{\omega_\Tau(T)}$, the first terms on the right-hand sides of~\eqref{E:est-comparison} and~\eqref{E:osc-comparison} may be treated as in~\cite[Corollary 3.4 and Corollary 3.5]{CKNS-quasi-opt} for linear elliptic problems, respectively, and the assertions of Propositions~\ref{P:reduccion del estimador} and~\ref{P:perturbacion de la oscilacion} follow. On the other hand, while proving that $g_\Tau(V,W;T) \lesssim \|\nabla(V-W)\|_{\omega_\Tau(T)}$ is easy for linear problems by using inverse inequalities and trace theorems, it is not so obvious for nonlinear problems. Therefore, we omit the details of the proofs of the last two propositions, but we prove the following lemma, which is the main difference with linear problems~\cite{CKNS-quasi-opt}.

\begin{lemma}\label{L:bound for gtau}
Let $\Tau\in\TT$ and let $g_\Tau$ be given by~\eqref{E:gtau}. Then, there holds that
\begin{equation}\label{E:local bound for gtau}
g_\Tau(V,W;T)\lesssim \|\nabla(V-W)\|_{\omega_\Tau(T)},\qquad\forall\,V,W\in\VV_\Tau,\quad\forall\,T\in\Tau.
\end{equation}
Consequently, there exists a constant $C_E>1$ which depends on $d$, $\kappa_\TT$ and the problem data, such that
\begin{equation}\label{E:bound for gtau}
\sum_{T\in\Tau} g_\Tau^2(V,W;T)\le C_E \|\nabla(V-W)\|_\Omega^2,\qquad\forall\,V,W\in\VV_\Tau.
\end{equation}
\end{lemma}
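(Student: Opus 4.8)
The plan is to reduce the claim to pointwise Lipschitz‑type bounds on the flux
$\sigma(x,\xi)\definedas\alpha(x,|\xi|^{2})\xi=\nabla_2\gamma(x,\xi)$ (cf.~\eqref{E:gradiente de gamma}) and then to combine them with standard scaling arguments, using crucially that $\nabla V$ and $\nabla W$ are \emph{constant} on each element. Since the data term $f$ cancels in the differences, on every $T'\in\Tau$ one has $R_\Tau(V)-R_\Tau(W)=-\nabla_x\!\cdot\!\big[\sigma(\cdot,\nabla V)-\sigma(\cdot,\nabla W)\big]$, and for an interior side $S=\overline{T_1}\cap\overline{T_2}$,
\[
\big(J_\Tau(V)-J_\Tau(W)\big)\big|_{S}=\tfrac12\sum_{i=1,2}\big(\sigma(\cdot,\nabla V|_{T_i})-\sigma(\cdot,\nabla W|_{T_i})\big)\cdot\vec{n}_i .
\]
Thus, fixing $T'\in\Tau$ and writing $\xi_V\definedas\nabla V|_{T'}$, $\xi_W\definedas\nabla W|_{T'}$ (constant vectors), the lemma follows once one controls $|\sigma(x,\xi_V)-\sigma(x,\xi_W)|$ on $T'$ (for the jump term) and $\big|\nabla_x[\sigma(x,\xi_V)-\sigma(x,\xi_W)]\big|$ on $T'$ (for the element‑residual term).

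For the first of these I would simply invoke that \eqref{E:main cond} makes $\nabla_2\gamma(x,\cdot)=\sigma(x,\cdot)$ Lipschitz in its second variable, uniformly in $x$, so $|\sigma(x,\xi)-\sigma(x,\zeta)|\le L_\sigma|\xi-\zeta|$ with $L_\sigma$ depending only on $C_a$. The second estimate is the genuinely new ingredient and, I expect, the main obstacle: I claim that under the structural hypotheses on $\alpha$ the map $x\mapsto\sigma(x,\xi)-\sigma(x,\zeta)$ belongs to $W^{1,\infty}(\Omega)$ for fixed $\xi,\zeta$ and satisfies $\big|\nabla_x[\sigma(x,\xi)-\sigma(x,\zeta)]\big|\le\tilde L_\sigma\,|\xi-\zeta|$ for a.e.\ $x\in\Omega$. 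Differentiating the product $\alpha(x,|\xi|^{2})\xi$ in $x$ introduces $\nabla_x\alpha$, so bounding the $\xi$‑increment by the mean value theorem requires boundedness of $\nabla_x\alpha$ and of the mixed derivative $\nabla_x D_2\alpha$ on $\Omega\times\RR_+$, together with \eqref{E:alpha bounded}; turning the hypotheses on $\alpha$ into this clean inequality is where the new technical work lies. (If $\alpha$ does not depend on $x$, then $\sigma(\cdot,\xi_V)-\sigma(\cdot,\xi_W)$ is constant on $T'$, the element‑residual difference vanishes, and only the jump term is present.)

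Given these two pointwise bounds the rest is scaling. Since $\nabla(V-W)$ is constant on $T'$, $\|\nabla(V-W)\|_{L^\infty(T')}=|T'|^{-1/2}\|\nabla(V-W)\|_{T'}\simeq H_{T'}^{-d/2}\|\nabla(V-W)\|_{T'}$. For the element residual on $T$ this gives $H_T\|R_\Tau(V)-R_\Tau(W)\|_{T}\lesssim H_T\,|T|^{1/2}\,\tilde L_\sigma\,\|\nabla(V-W)\|_{L^\infty(T)}=H_T\,\tilde L_\sigma\,\|\nabla(V-W)\|_{T}\lesssim\|\nabla(V-W)\|_{T}$, since $H_T\le|\Omega|^{1/d}$. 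For the jump on a side $S=\overline T\cap\overline{T'}$ shared with a neighbour $T'$, using $|S|\simeq H_T^{d-1}\simeq H_{T'}^{d-1}$ and $H_T\simeq H_{T'}$ (neighbouring shape‑regular elements have comparable size) one gets $H_T^{1/2}\|J_\Tau(V)-J_\Tau(W)\|_{S}\lesssim H_T^{1/2}|S|^{1/2}\big(\|\nabla(V-W)\|_{L^\infty(T)}+\|\nabla(V-W)\|_{L^\infty(T')}\big)\lesssim\|\nabla(V-W)\|_{T}+\|\nabla(V-W)\|_{T'}$. Summing over the at most $d+1$ sides of $T$ and using $\#\mathcal{N}_\Tau(T)\lesssim1$ yields~\eqref{E:local bound for gtau}.

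Finally, squaring~\eqref{E:local bound for gtau}, summing over $T\in\Tau$, and using the finite overlap of the patches $\{\omega_\Tau(T)\}_{T\in\Tau}$ — each $T'\in\Tau$ lies in $\omega_\Tau(T)$ for exactly the $\#\mathcal{N}_\Tau(T')\lesssim1$ elements $T\in\mathcal{N}_\Tau(T')$ — one obtains $\sum_{T\in\Tau}\|\nabla(V-W)\|_{\omega_\Tau(T)}^{2}\le C(d,\kappa_\TT)\,\|\nabla(V-W)\|_\Omega^{2}$, hence~\eqref{E:bound for gtau} with a constant $C_E=C_E(d,\kappa_\TT,\text{data})$, which may be enlarged to satisfy $C_E>1$.
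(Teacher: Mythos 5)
Your proposal is essentially the paper's proof: the key new estimate you isolate --- that the $\xi$-increment of the flux $\sigma(x,\xi)=\nabla_2\gamma(x,\xi)$ has an $x$-derivative controlled by $|\xi-\zeta|$ --- is exactly what the paper establishes in Lemma~\ref{L:auxiliar acotacion de gtau} (stated there in finite-difference form via Lipschitz continuity in $x$ of $D_2^2\gamma$), and the scaling and summation steps coincide, the only cosmetic difference being that the paper uses a scaled trace inequality on the side where you use a direct $L^\infty$ bound. One small correction to the hypothesis you propose: boundedness of $\nabla_x D_2\alpha$ alone is not enough, because $(D_2^2\gamma(x,\xi))_{ij}=2D_2\alpha(x,|\xi|^2)\xi_i\xi_j+\alpha(x,|\xi|^2)\delta_{ij}$ carries a factor $|\xi|^2$; the remark following Lemma~\ref{L:auxiliar acotacion de gtau} accordingly requires $D_2\alpha(\cdot,t)\,t$, not $D_2\alpha$ itself, to be Lipschitz in $x$ uniformly in $t$.
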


In order to prove Lemma~\ref{L:bound for gtau}, we define
\begin{equation}\label{E:Gamma de v}
\Gamma_{V}(x)\definedas\nabla_2\gamma(x,\nabla V(x))= \alpha(x,|\nabla V(x)|^2)\nabla V(x),\qquad\forall\,x\in\Omega,
\end{equation}
and prove first the following auxiliary result.

\begin{lemma}\label{L:auxiliar acotacion de gtau}
Let $T\in\Tau$. Let $D_2^2\gamma$ be the Hessian matrix of $\gamma$ as a function of its second variable. If
$$\|D_2^2\gamma(x,\xi)-D_2^2\gamma(y,\xi)\|_2\le C_\gamma |x-y|,\qquad\forall\,x,y\in T,\,\xi\in\RR^d,$$
for some constant $C_\gamma>0$, then for all $V,W\in\PP_1(T)$, there holds that
$$|\Gamma_{V}(x)-\Gamma_{W}(x)-\Gamma_V(y)+\Gamma_W(y)|\le C_\gamma\|\nabla(V-W)\|_{L^\infty(T)}|x-y|,\qquad\forall\,x,y\in T.$$
\end{lemma}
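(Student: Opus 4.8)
The statement asks to control the "mixed second difference" of $\Gamma_V - \Gamma_W$ in the spatial variable. The key structural observation is that $\Gamma_V(x) = \nabla_2\gamma(x,\nabla V(x))$, and since $V,W \in \PP_1(T)$, the gradients $\nabla V$ and $\nabla W$ are \emph{constant} vectors on $T$; call them $p$ and $q$. So the quantity to estimate is
$$
\big|\,[\nabla_2\gamma(x,p)-\nabla_2\gamma(x,q)] - [\nabla_2\gamma(y,p)-\nabla_2\gamma(y,q)]\,\big|.
$$

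My plan is to write this as an integral along the segment joining $q$ to $p$ in the gradient variable. Set $\phi(s) = \nabla_2\gamma(x, q + s(p-q)) - \nabla_2\gamma(y, q + s(p-q))$ for $s\in[0,1]$. Then the bracketed difference above is $\phi(1)-\phi(0) = \int_0^1 \phi'(s)\,ds$, and $\phi'(s) = \big[D_2^2\gamma(x, q+s(p-q)) - D_2^2\gamma(y, q+s(p-q))\big](p-q)$, where $D_2^2\gamma$ is the Hessian in the second variable. Applying the hypothesized Lipschitz bound $\|D_2^2\gamma(x,\xi)-D_2^2\gamma(y,\xi)\|_2 \le C_\gamma|x-y|$ with $\xi = q+s(p-q)$ (valid for every $s$, since the hypothesis is uniform in $\xi\in\RR^d$), we get $|\phi'(s)| \le C_\gamma|x-y|\,|p-q|$ for all $s$, hence $|\phi(1)-\phi(0)| \le C_\gamma|x-y|\,|p-q|$. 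Finally $|p-q| = |\nabla V - \nabla W| = \|\nabla(V-W)\|_{L^\infty(T)}$ because the gradients are constant on $T$, which is exactly the claimed bound.

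The argument is short and essentially a fundamental-theorem-of-calculus computation; the main thing to be careful about is the order in which the two differences are taken — one must difference in the \emph{spatial} variable the Hessian (so that the Lipschitz hypothesis applies) while integrating in the \emph{gradient} variable, which is why introducing $\phi$ as a function of the segment parameter $s$ is the right bookkeeping device. There is no real obstacle here; the genuine work of the section is in verifying, for the actual $\gamma$ arising from $\alpha$, that a bound of the form $\|D_2^2\gamma(x,\xi)-D_2^2\gamma(y,\xi)\|_2\le C_\gamma|x-y|$ holds under the standing assumptions on $\alpha$ — but that is the hypothesis of this lemma, to be discharged elsewhere, not part of this proof.
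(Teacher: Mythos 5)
Your proof is correct and is essentially identical to the paper's: both define $p=\nabla V$, $q=\nabla W$ (constant on $T$), write the mixed second difference as $\int_0^1 \big[D_2^2\gamma(x,q+s(p-q))-D_2^2\gamma(y,q+s(p-q))\big](p-q)\,ds$ by the fundamental theorem of calculus, and apply the Lipschitz hypothesis on $D_2^2\gamma$.
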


\begin{remark}
Taking into account~\eqref{E:gradiente de gamma}, we have that
$$(D_2^2\gamma(x,\xi))_{ij}=2D_2\alpha(x,|\xi|^2) \xi_i\xi_j+\alpha(x,|\xi|^2) \delta_{ij},$$
for $1\le i,j\le d$, where $\delta_{ij}$ denotes the Kronecker's delta. In consequence, if $\alpha(\cdot,t)$ and $D_2 \alpha(\cdot,t)t$ are Lipschitz on each $T\in\Tau_0$ uniformly in $t>0$, it follows that $D_2^2\gamma(x,\xi)$ is locally Lipschitz as a function of its first variable, i.e., there exists a constant $C_\gamma>0$ such that
$$\|D_2^2\gamma(x,\xi)-D_2^2\gamma(y,\xi)\|_2\le C_\gamma |x-y|,\qquad\forall\,x,y\in T,\,\xi\in\RR^d,$$
for all $T\in\Tau_0$. In particular this holds for any $T\in\Tau$, $\Tau \in \TT$.
\end{remark}

\begin{proof}[Proof of Lemma~\ref{L:auxiliar acotacion de gtau}]
Let $T\in\Tau$. Let $V,W\in\PP_1(T)$ and $x,y\in T$. Taking into account that $V$ and $W$ are linear over $T$, we denote $\mathbf{v}\definedas \nabla V(x)=\nabla V(y)$ and $\mathbf{w}\definedas \nabla W(x)=\nabla W(y)$. Thus, we have that
\begin{align*}
|\Gamma_{V}(x)&-\Gamma_{W}(x)-\Gamma_V(y)+\Gamma_W(y)|=|\nabla_2\gamma(x,\mathbf{v})-\nabla_2\gamma(x,\mathbf{w})-\nabla_2\gamma(y,\mathbf{v})+\nabla_2\gamma(y,\mathbf{w})|\\
&=\left|\int_0^1 \left[ D^2_2 \gamma(x,\mathbf{w}+r(\mathbf{v}-\mathbf{w}))-D^2_2 \gamma(y,\mathbf{w}+r(\mathbf{v}-\mathbf{w}))\right](\mathbf{v}-\mathbf{w})~dr\right|\\
&\le C_\gamma |x-y||\mathbf{v}-\mathbf{w}|,
\end{align*}
which completes the proof of the lemma.
\end{proof}

We conclude this section with the proof of Lemma~\ref{L:bound for gtau}, where we use that
\begin{equation*}
{R_\Tau(V)}_{|_{T}}= -\nabla \cdot \Gamma_V-f,\qquad\text{and}\qquad
{J_\Tau(V)}_{|_{S}}=\frac12\left({\Gamma_{V}}_{|_{T_1}}\cdot \vec{n}_1+{\Gamma_{V}}_{|_{T_2}}\cdot \vec{n}_2\right),
\quad S \subset \Omega,
\end{equation*}
which is an immediate consequence of~\eqref{E:Gamma de v} and the definitions of the element residual~\eqref{E:element-residual} and the jump residual~\eqref{E:jump-residual}. 

\begin{proof}[Proof of Lemma~\ref{L:bound for gtau}]
Let $\Tau\in\TT$ and let $V,W\in\VV_\Tau$. Let $T\in\Tau$ be fixed.

\step{1} By Lemma~\ref{L:auxiliar acotacion de gtau}, for the element residual we have that
\begin{align*}
\normT{R_\Tau(V)-R_\Tau(W)}&= \normT{\nabla \cdot (\Gamma_{V}-\Gamma_W)}\le \h_T^{\frac{d}{2}} \|\nabla \cdot (\Gamma_{V}-\Gamma_{W})\|_{L^\infty(T)}\\
&\lesssim\h_T^{\frac{d}{2}}\sup_{\substack{x,y\in T\\x\neq y}}\frac{|\Gamma_{V}(x)-\Gamma_{W}(x)-\Gamma_V(y)+\Gamma_W(y)|}{|x-y|}\\
&\lesssim\h_T^{\frac{d}{2}}\|\nabla(V-W)\|_{L^\infty(T)}=\|\nabla(V-W)\|_T,
\end{align*}
and thus,
\begin{equation}\label{E:gtau aux1}
\h_T\normT{R_\Tau(V)-R_\Tau(W)}\lesssim \|\nabla (V-W)\|_T.
\end{equation}

\step{2} Consider now the term corresponding to the jump residual. If $S$ is a side of $T$ which is interior to $\Omega$ and if $T_1$ and $T_2$ are the elements sharing $S$, we have that
\begin{align*}
\normS{J_\Tau(V)-J_\Tau(W)}&=\normS{\frac12\sum_{i=1,2}{(\Gamma_{V}-\Gamma_W)}_{|_{T_i}}\cdot \vec{n}_i}\leq \sum_{i=1,2}\normS{{(\Gamma_{V}-\Gamma_W)}_{|_{T_i}} }\\
&\lesssim \sum_{i=1,2}\left(\h_T^{-\frac12}\|\Gamma_{V}-\Gamma_{W}\|_{T_i}+\h_T^{\frac12}\|\nabla (\Gamma_{V}-\Gamma_{W})\|_{T_i}\right),
\end{align*}
where we have used a scaled trace theorem. Since $\nabla_2 \gamma$ is Lipschitz as a function of its second variable, we have that
$$|\Gamma_{V}(x)-\Gamma_W(x)|=|\nabla_2\gamma(x,\nabla V(x))-\nabla_2\gamma(x,\nabla W(x))|\lesssim|\nabla V(x)-\nabla W(x)|,$$
for $x\in T_i$ ($i=1,2$), and therefore,
$$\|\Gamma_{V}-\Gamma_W\|_{T_i}\lesssim\|\nabla (V-W)\|_{T_i},\qquad i=1,2.$$ 
Using the same argument as in~\step{1}, we have that $\|\nabla  (\Gamma_{V}-\Gamma_{W})\|_{T_i}\lesssim \|\nabla (V-W)\|_{T_i}$, for $i=1,2$, and in consequence,
\begin{equation}\label{E:gtau aux2}
\h_T^{\frac12}\normbT{J_\Tau(V)-J_\Tau(W)}\lesssim \|\nabla (V-W)\|_{\omega_\Tau(T)}.
\end{equation}
Finally,~\eqref{E:local bound for gtau} follows from~\eqref{E:gtau aux1} and~\eqref{E:gtau aux2}, taking into account~\eqref{E:gtau}.
\end{proof}

\section{Linear convergence of an adaptive FEM}\label{S:convergence}

In this section we present the adaptive FEM and establish one of the main results of this article (Theorem~\ref{T:propiedad de contraccion} below) which guarantees the convergence of the adaptive sequence.

\subsection{The adaptive loop}

We consider the following adaptive loop to approximate the solution $u$ of problem~\eqref{E:cont prob}.

\medskip
\begin{center}
\doublebox{ 
\begin{minipage}{.9\textwidth}
\textbf{Adaptive Algorithm.}
\tt
Let $\Tau_0$ be an initial conforming mesh of $\Omega$ and let $\theta$ be a parameter satisfying $0<\theta<1$. Let $k=0$.
\begin{enumerate}
\vspace{0.15cm}
 \item [1.] $U_k\definedas \textsf{SOLVE}(\Tau_k)$.
\vspace{0.15cm}
\item [2.] $\{\eta_k(T)\}_{T\in\Tau_k}\definedas \textsf{ESTIMATE}(U_k,\Tau_k)$.
\vspace{0.15cm}
\item [3.] $\MM_k\definedas \textsf{MARK}(\{\eta_k(T)\}_{T\in\Tau_k},\Tau_k,\theta)$.
\vspace{0.15cm}
\item [4.] $\Tau_{k+1}\definedas \textsf{REFINE}(\Tau_k,\MM_k,n)$.
\vspace{0.15cm}
\item [5.] Increment $k$ and go back to step 1.
\end{enumerate}
\end{minipage}
}
\end{center}

Now we explain each module in the last algorithm.

\begin{itemize}
\item \textbf{The module \textsf{SOLVE}.} This module takes a conforming triangulation $\Tau_k$ of $\Omega$ as input argument and outputs the solution $U_k$ of the discrete problem~\eqref{E:disc prob} in $\Tau_k$; i.e., $U_k\in\VV_k\definedas \VV_{\Tau_k}$ satisfies
\begin{equation*}
a(U_k;U_k,V)=L(V),\qquad \forall~V\in  \VV_k.
\end{equation*}

\item \textbf{The module \textsf{ESTIMATE}.} This module computes the a posteriori local error estimators $\eta_k(T)$ of $U_k$ over $\Tau_k$ given by
$\eta_k(T)\definedas \eta_{\Tau_k}(U_k;T),$ for all $T\in\Tau_k,$
(see~\eqref{E:local estimator}).

\item \textbf{The module \textsf{MARK}.} Based on the local error estimators, the module \textsf{MARK} selects a subset $\MM_k$ of $\Tau_k$, using an \emph{efficient D\"orfler's strategy}. More precisely, given the marking parameter $\theta\in (0,1)$, the module \textsf{MARK} selects a \emph{minimal} subset $\MM_k$ of $\Tau_k$ such that
\begin{equation}\label{E:dorfler}
 \eta_k(\MM_k)\ge \theta \, \eta_k(\Tau_k),
\end{equation}
where $\eta_k(\MM_k)=\left(\sum_{T\in\MM_k}\eta_k^2(T)\right)^{\frac12}$ and $\eta_k(\Tau_k)=\left(\sum_{T\in\Tau_k}\eta_k^2(T)\right)^{\frac12}$.

\item \textbf{The module \textsf{REFINE}.} Finally, the module \textsf{REFINE} takes the mesh $\Tau_k$ and the subset $\MM_k\subset \Tau_k$ as inputs. By using the bisection rule described by Stevenson in~\cite{Stevenson-refine}, this module refines (bisects) $n$ times (where $n \ge 1$ is fixed) each element in $\MM_k$. After that, with the goal of keeping conformity of the mesh, possibly some further bisections are performed leading to a new conforming triangulation $\Tau_{k+1} \in \TT$ of $\Omega$, which is a refinement of $\Tau_k$ and the output of this module.
\end{itemize}

From now on, $U_k$, $\{\eta_k(T)\}_{T\in\Tau_k}$, $\MM_k$, $\Tau_k$ will denote the outputs of the corresponding modules \textsf{SOLVE}, \textsf{ESTIMATE}, \textsf{MARK} and \textsf{REFINE}, when iterated after starting with a given initial mesh $\Tau_0$.

\subsection{An equivalent notion for the error}

In order to prove a contraction property for the error of a similar AFEM for linear elliptic problems the well-known \emph{Galerkin orthogonality relationship} is used(see~\cite{CKNS-quasi-opt}). In this case, due to the nonlinearity of our problem, this property does not hold. We present an equivalent notion of error so that it is possible to establish a property analogous to the orthogonality (cf.~\eqref{E:orthogonality} below).

It is easy to check that $\JJ:H^1_0(\Omega)\to\RR$ given by
\begin{equation*}
\JJ(v)\definedas \int_0^1 \langle A(rv),v\rangle~dr=\int_\Omega \gamma(\cdot,\nabla v)~dx,\quad\forall\,v\in H^1_0(\Omega),
\end{equation*}
is a potential for the operator $A$. More precisely, if $\WW$ is a closed subspace of $H^1_0(\Omega)$, the following claims are equivalent
\begin{itemize}
 \item $w\in\WW$ is solution of
\begin{equation}\label{E:problema en w}
a(w;w,v)=L(v),\qquad\forall\,v\in\WW,
\end{equation}
where $L(v)=\int_\Omega fv$, for $v\in H^1_0(\Omega)$.
\item $w\in\WW$ minimizes the functional $\FF:H^1_0(\Omega)\to\RR$ over $\WW$, where $\FF$ is given by
\begin{equation}\label{E:funcional F beta}
\FF(v)\definedas\JJ(v)-L(v)=\int_\Omega \gamma(\cdot,\nabla v)-fv \, dx,\quad\, v\in H^1_0(\Omega).
\end{equation}
\end{itemize}

The following theorem states a notion equivalent to the $H^1_0(\Omega)$-error. The proof follows the ideas used in~\cite{diening-kreuzer} and uses that the Hessian matrix of $\gamma$, denoted by $D^2_2\gamma$, is uniformly elliptic, i.e.,
\begin{equation}\label{E:hessiano de gamma uniformemente eliptico}
c_A|\zeta|^2\le D^2_2\gamma(x,\xi)\zeta\cdot\zeta\le  C_A|\zeta|^2,\qquad\forall\,x\in\Omega,\,\xi,\zeta\in\RR^d.
\end{equation}
This fact holds because $\nabla_2 \gamma$ is Lipschitz and strongly monotone as a function of its second variable.

\begin{theorem}\label{T:equivalence for error}
Let $\WW$ be a closed subspace of $H^1_0(\Omega)$ and let $\FF$ be given by~\eqref{E:funcional F beta}. If $w\in\WW$ satisfies~\eqref{E:problema en w}, then 
$$\frac{c_A}{2}\|\nabla(v-w)\|_\Omega^2\le\FF(v)-\FF(w)\le \frac{C_A}{2}\|\nabla(v-w)\|_\Omega^2,\quad\,\forall\,v\in\WW.$$

%
\end{theorem}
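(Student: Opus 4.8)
The plan is to use the fundamental theorem of calculus to write the difference $\FF(v) - \FF(w)$ as an integral involving the Hessian $D_2^2\gamma$, and then apply the uniform ellipticity bound~\eqref{E:hessiano de gamma uniformemente eliptico} together with the Euler--Lagrange equation~\eqref{E:problema en w} to control it from above and below by $\|\nabla(v-w)\|_\Omega^2$. Concretely, fix $v\in\WW$ and set $e\definedas v-w\in\WW$. I would consider the scalar function $\phi(s)\definedas\FF(w+se)$ for $s\in[0,1]$, so that $\FF(v)-\FF(w)=\phi(1)-\phi(0)$. Since $\gamma$ is $\mathcal C^2$ in its second variable (with bounds~\eqref{E:hessiano de gamma uniformemente eliptico}), $\phi$ is $\mathcal C^2$ on $[0,1]$ with
$$\phi'(s)=\int_\Omega \nabla_2\gamma(\cdot,\nabla w+s\nabla e)\cdot\nabla e\,dx-L(e),\qquad \phi''(s)=\int_\Omega D_2^2\gamma(\cdot,\nabla w+s\nabla e)\nabla e\cdot\nabla e\,dx,$$
where differentiation under the integral sign is justified by the uniform bounds on the integrand.

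Next I would observe that $\phi'(0)=0$. Indeed, $\phi'(0)=\int_\Omega\nabla_2\gamma(\cdot,\nabla w)\cdot\nabla e\,dx-L(e)$, and by~\eqref{E:gradiente de gamma} this equals $\int_\Omega\alpha(\cdot,|\nabla w|^2)\nabla w\cdot\nabla e\,dx-L(e)=a(w;w,e)-L(e)$, which vanishes because $e\in\WW$ and $w$ satisfies~\eqref{E:problema en w}. Then a Taylor expansion with integral remainder gives
$$\FF(v)-\FF(w)=\phi(1)-\phi(0)=\phi'(0)+\int_0^1(1-s)\phi''(s)\,ds=\int_0^1(1-s)\phi''(s)\,ds.$$
Applying~\eqref{E:hessiano de gamma uniformemente eliptico} pointwise with $\zeta=\nabla e(x)$ inside $\phi''(s)$, we get
$$c_A\|\nabla e\|_\Omega^2\le\phi''(s)\le C_A\|\nabla e\|_\Omega^2,\qquad\forall\,s\in[0,1],$$
and since $\int_0^1(1-s)\,ds=\tfrac12$, multiplying through and integrating in $s$ yields exactly
$$\frac{c_A}{2}\|\nabla(v-w)\|_\Omega^2\le\FF(v)-\FF(w)\le\frac{C_A}{2}\|\nabla(v-w)\|_\Omega^2,$$
which is the claimed estimate.

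The routine parts are the differentiation under the integral sign and the Taylor formula; the only point requiring a little care is the vanishing of the first-order term, which is precisely where the hypothesis that $w$ solves~\eqref{E:problema en w} is used, and the fact that $e=v-w\in\WW$ so that it is an admissible test function. I would not expect any genuine obstacle here: the argument is the standard convexity/energy identity, and the nonlinearity enters only through the $x$- and $\xi$-dependence of $\gamma$, which is harmless because~\eqref{E:hessiano de gamma uniformemente eliptico} holds uniformly in both variables. One could alternatively phrase the same computation directly in terms of $\FF$ using $\FF(v)-\FF(w)=\int_0^1\langle A(w+se)-?,e\rangle\,ds$-type manipulations, but the scalar-function route via $\phi$ is the cleanest.
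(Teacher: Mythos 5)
Your proof is correct and follows essentially the same route as the paper: parametrize the segment from $w$ to $v$, expand $\FF$ along it with Taylor's formula with integral remainder, kill the first-order term via the Euler--Lagrange equation \eqref{E:problema en w} (the paper phrases this as $w$ minimizing $\FF$ over $\WW$, which is equivalent), and sandwich the second-order term using the uniform ellipticity \eqref{E:hessiano de gamma uniformemente eliptico}. The only cosmetic difference is that you verify $\phi'(0)=0$ directly from the weak form rather than appealing to the minimization characterization, which is a perfectly equivalent way to phrase the same fact.
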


\begin{proof}
Let $\WW$ be a closed subspace of $H^1_0(\Omega)$ and let $w\in\WW$ be the solution of~\eqref{E:problema en w}. Let $v\in\WW$ be fixed and arbitrary. For $z\in\RR$, we define $\phi(z)\definedas (1-z)w+zv$, and note that
\begin{equation*}
\phi'(z)=v-w\qquad\text{and}\qquad\nabla \phi(z)=(1-z)\nabla w+z\nabla v.
\end{equation*}
If we define $\psi(z)\definedas \FF(\phi(z))$, integration by parts yields
\begin{equation}\label{E:taylor}
\FF(v)-\FF(w)=\psi(1)-\psi(0)=\psi'(0)+ \int_0^1 \psi''(z)(1-z)~dz.
\end{equation}
From~\eqref{E:funcional F beta} it follows that
\begin{equation}\label{E:psidet}
\psi(z)=\FF(\phi(z))=\int_\Omega \gamma(x,\nabla \phi(z))~dx-\int_\Omega f\phi(z)~dx, 
\end{equation}
and therefore, in order to obtain the derivatives of $\psi$ we first compute $\frac{\partial}{\partial z} (\gamma(x,\nabla \phi(z)))$, for each $x\in\Omega$ fixed.
On the one hand, we have that
\begin{align*}
\frac{\partial}{\partial z} \gamma(\cdot,\nabla \phi(z))&=\nabla_2\gamma(\cdot,\nabla \phi(z))\cdot \frac{\partial}{\partial z}\nabla \phi(z)=\nabla_2\gamma(\cdot,\nabla \phi(z))\cdot \nabla (v-w),
\end{align*}
and then
\begin{align*}
\frac{\partial^2}{\partial z^2} \gamma(\cdot,\nabla \phi(z))
&=D^2_2 \gamma(\cdot,\nabla \phi(z))\nabla(v-w)\cdot\nabla(v-w),
\end{align*}
where $D^2_2 \gamma
$ is the Hessian matrix of $\gamma$ as a function of its second variable. Thus, taking into account that $\phi''(z)=0$ for all $z\in\RR$, from~\eqref{E:psidet} it follows that
\begin{equation}\label{E:psi segunda}
\psi''(z)=\int_\Omega D^2_2 \gamma(x,\nabla \phi(z))\nabla(v-w)\cdot\nabla(v-w)~dx.
\end{equation}
Since $w$ minimizes $\FF$ over $\WW$, we have that $\psi'(0)=0$; and using~\eqref{E:psi segunda}, from~\eqref{E:taylor} we obtain that
\begin{align*}
\FF(v)-\FF(w)&= \int_0^1\int_\Omega D^2_2 \gamma(x,\nabla \phi(z))\nabla(v-w)\cdot\nabla(v-w) (1-z)~dx~dz.
\end{align*}
Finally, since $D^2_2\gamma$ is uniformly elliptic (cf.~\eqref{E:hessiano de gamma uniformemente eliptico}) we have that
$$\frac{c_A}{2}\|\nabla(v-w)\|_\Omega^2\le\int_0^1\int_\Omega D^2_2 \gamma(x,\nabla \phi(z))\nabla(v-w)\cdot\nabla(v-w) (1-z)~dx~dz\le\frac{C_A}{2}\|\nabla(v-w)\|_\Omega^2,$$
which concludes the proof.
\end{proof}

As an immediate consequence of the last theorem,
\begin{equation}\label{E:equivalence for error}
\frac{c_A}{2}\|\nabla(U_k-U_p)\|_\Omega^2\le\FF(U_k)-\FF(U_p)\le \frac{C_A}{2} \|\nabla(U_k-U_p)\|_\Omega^2,\qquad\forall\,k,p\in\NN_0,\,k<p,
\end{equation}
and the same estimation holds replacing $U_p$ by $u$, the exact weak solution of problem~\eqref{E:cont prob}.

\subsection{Convergence of the adaptive FEM}

Recall that $u$ denotes the exact weak solution of problem~\eqref{E:cont prob}, and $U_k$, $\{\eta_k(T)\}_{T\in\Tau_k}$, $\MM_k$, $\Tau_k$ will denote the outputs of the corresponding modules \textsf{SOLVE}, \textsf{ESTIMATE}, \textsf{MARK} and \textsf{REFINE} of the Adaptive Algorithm when iterated after starting with a given initial mesh $\Tau_0$.

Taking into account the estimator reduction (Proposition~\ref{P:reduccion del estimador}), the global upper bound (Theorem~\ref{T:global upper bound}) and~\eqref{E:equivalence for error}, we now prove the following result which establish the convergence of the Adaptive Algorithm.

\begin{theorem}[Contraction property]\label{T:propiedad de contraccion}
There exist constants $0<\rho<1$ and $\mu>0$ which depend on $d$, $\kappa_\TT$, of problem data, of number of refinements $n$ performed on each marked element and the marking parameter $\theta$ such that
$$[\FF(U_{k+1})-\FF(u)]+\mu \eta_{k+1}^{2} \leq \rho^2([\FF(U_k)-\FF(u)]+\mu \eta _k^{2}),\qquad\forall\,k\in\NN_0,$$
where $\eta_k\definedas\left(\sum_{T\in\Tau_k}\eta_k^2(T)\right)^{\frac12}$ denotes the global error estimator in $\Tau_k$.
\end{theorem}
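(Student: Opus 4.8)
The plan is to follow the argument of Casc\'on et al.~\cite{CKNS-quasi-opt}, replacing the Galerkin orthogonality \eqref{E:Galerkin orthogonality} (which fails here) by the trivial energy identity combined with Theorem~\ref{T:equivalence for error}. Fix $k\in\NN_0$ and set $e_k\definedas\FF(U_k)-\FF(u)$; note $e_k\ge 0$ by Theorem~\ref{T:equivalence for error} applied with $\WW=H^1_0(\Omega)$, $w=u$. The three ingredients I would combine are: a one-sided ``quasi-orthogonality'' coming from the energy, the estimator reduction of Proposition~\ref{P:reduccion del estimador} together with D\"orfler marking \eqref{E:dorfler}, and the global upper bound of Theorem~\ref{T:global upper bound}.

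First, since $\Tau_{k+1}$ refines $\Tau_k$ we have $U_k\in\VV_{k+1}$ while $U_{k+1}$ minimizes $\FF$ over $\VV_{k+1}$; hence the lower bound in Theorem~\ref{T:equivalence for error} (with $\WW=\VV_{k+1}$, $w=U_{k+1}$, $v=U_k$) gives $\FF(U_k)-\FF(U_{k+1})\ge\frac{c_A}{2}\|\nabla(U_{k+1}-U_k)\|_\Omega^2$, and therefore, using the identity $e_{k+1}=e_k-[\FF(U_k)-\FF(U_{k+1})]$,
\[
e_{k+1}\le e_k-\tfrac{c_A}{2}\|\nabla(U_{k+1}-U_k)\|_\Omega^2 .
\]
Next, Proposition~\ref{P:reduccion del estimador} (with $\MM=\MM_k$, $V=U_k$, $V_*=U_{k+1}$, each marked element bisected $n\ge 1$ times) together with \eqref{E:dorfler}, i.e. $\eta_k^2(\MM_k)\ge\theta^2\eta_k^2$, yields, for every $\delta>0$ and with $\lambda\definedas(1-2^{-n/d})\theta^2\in(0,1)$,
\[
\eta_{k+1}^2\le(1+\delta)(1-\lambda)\,\eta_k^2+(1+\delta^{-1})\,C_E\,\|\nabla(U_{k+1}-U_k)\|_\Omega^2 .
\]

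Then I would add $\mu$ times the second display to the first, choosing $\mu\definedas c_A/\big(2(1+\delta^{-1})C_E\big)$ so that the terms $\|\nabla(U_{k+1}-U_k)\|_\Omega^2$ cancel, obtaining $e_{k+1}+\mu\,\eta_{k+1}^2\le e_k+\mu(1+\delta)(1-\lambda)\,\eta_k^2$. I would fix $\delta>0$ small enough that $\gamma_0\definedas(1+\delta)(1-\lambda)<1$ (possible since $\lambda\in(0,1)$; e.g. $\delta=\lambda/(2(1-\lambda))$ gives $\gamma_0=1-\lambda/2$). Finally, writing $e_k=\rho^2 e_k+(1-\rho^2)e_k$ and bounding $(1-\rho^2)e_k\le(1-\rho^2)\frac{C_AC_U}{2}\eta_k^2$ via Theorem~\ref{T:equivalence for error} and Theorem~\ref{T:global upper bound}, the inequality becomes $e_{k+1}+\mu\eta_{k+1}^2\le\rho^2 e_k+\mu\big(\gamma_0+(1-\rho^2)\Lambda\big)\eta_k^2$ with $\Lambda\definedas C_AC_U/(2\mu)$; any $\rho^2$ in the interval $[(\gamma_0+\Lambda)/(1+\Lambda),\,1)$ makes the coefficient of $\mu\eta_k^2$ at most $\rho^2$, which is exactly the asserted contraction. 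Since $\gamma_0<1$ this interval is nonempty and $\rho<1$, and the constants depend only on $d$, $\kappa_\TT$, the problem data, $n$ and $\theta$, being selected in the order $\delta\to\mu\to\rho$.

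The only genuinely new point, and the one I expect to require the most care, is the first step: there is no Pythagoras identity, so its role must be played by the energy monotonicity $\FF(U_{k+1})\le\FF(U_k)$ together with the \emph{lower} bound $\FF(U_k)-\FF(U_{k+1})\ge\frac{c_A}{2}\|\nabla(U_{k+1}-U_k)\|_\Omega^2$ from Theorem~\ref{T:equivalence for error}; this one-sided estimate is precisely what absorbs the consistency term produced by the estimator reduction. Everything else is the standard bookkeeping of choosing $\delta$, $\mu$, $\rho$ in that order.
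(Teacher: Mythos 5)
Your proposal is correct and follows essentially the same argument as the paper: the key step, replacing Pythagoras by the one-sided energy quasi-orthogonality $\FF(U_k)-\FF(U_{k+1})\ge\frac{c_A}{2}\|\nabla(U_{k+1}-U_k)\|_\Omega^2$ from Theorem~\ref{T:equivalence for error}, is exactly the paper's, as are the use of estimator reduction plus D\"orfler marking, the choice $\mu=c_A/\big(2(1+\delta^{-1})C_E\big)$, and the global upper bound. The only deviation is cosmetic bookkeeping at the end: the paper splits the $\eta_k^2$ coefficient in two and converts one half into $[\FF(U_k)-\FF(u)]$ via $\eta_k^2\ge\tfrac{2}{C_AC_U}[\FF(U_k)-\FF(u)]$ (yielding two factors $\rho_1(\delta),\rho_2(\delta)$ controlled simultaneously by $\delta$), whereas you fix $\delta$ first so $\gamma_0<1$, then split $e_k=\rho^2 e_k+(1-\rho^2)e_k$ and convert the remainder to $\eta_k^2$ by the same inequality read in the other direction.
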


\proof
Let $k \in \NN_0$, using that
\begin{equation}\label{E:orthogonality}
\FF(U_k)-\FF(u)=\FF(U_k)-\FF(U_{k+1})+\FF(U_{k+1})-\FF(u), 
\end{equation}
and the estimator reduction given by Proposition~\ref{P:reduccion del estimador} with $\Tau=\Tau_k$ and $\Taustar=\Tau_{k+1}$ we have that
\begin{align*}
[\FF(U_{k+1})-\FF(u)]+\mu \eta_{k+1}^{2} &\leq [\FF(U_k)-\FF(u)]-[\FF(U_k)-\FF(U_{k+1})]\\
&\qquad+(1+\delta
)\mu \left\{ \eta _k^{2}-\xi\eta _k^2(\MM_k)\right\} +(1+\delta ^{-1})C_E\mu \|\nabla (U_k-U_{k+1})\|_\Omega^{2},
\end{align*}
for all $\delta,\mu>0$, where $\xi\definedas 1-2^{-\frac{\nref}{d}}$ and $\eta_k^2(\MM_k)\definedas \sum_{T\in\MM_k}\eta_k^2(T)$. By choosing $\mu\definedas \frac{c_A}{2(1+\delta ^{-1})C_E}$, and using~\eqref{E:equivalence for error} it follows that
\begin{equation*}
[\FF(U_{k+1})-\FF(u)]+\mu \eta_{k+1}^{2} \leq [\FF(U_k)-\FF(u)]+(1+\delta
)\mu \left\{ \eta _k^{2}-\xi\eta _k^2(\MM_k)\right\}.
\end{equation*}
D\"orfler's strategy yields $\eta_k(\MM_k)\geq \theta\eta_k$ and thus
\begin{align*}
[\FF(U_{k+1})-\FF(u)]+\mu \eta_{k+1}^{2} &\leq [\FF(U_k)-\FF(u)]+(1+\delta
)\mu \eta _k^{2}-(1+\delta
)\mu \xi\theta^2\eta _k^{2}\\
& = [\FF(U_k)-\FF(u)]
+(1+\delta)\mu \left( 1 -\frac{\xi\theta^2}{2}\right) \eta _k^{2}-(1+\delta
)\mu \frac{\xi\theta^2}{2}\eta _k^{2}.
\end{align*}
Using~\eqref{E:equivalence for error}, the global upper bound (Theorem~\ref{T:global upper bound}) and that $ (1+\delta)\mu=\frac{c_A\delta}{2C_E}$ it follows that
\begin{equation*}
[\FF(U_{k+1})-\FF(u)]+\mu \eta_{k+1}^{2} 
\leq [\FF(U_k)-\FF(u)]
+(1+\delta )\mu\left(1- \frac{\xi\theta^2}{2}\right)\eta _k^{2}
- \frac{c_A\delta \xi\theta^2}{2C_UC_EC_A}[\FF(U_k)-\FF(u)].
\end{equation*}
If we define
$$\rho_1^2(\delta)\definedas\left( 1- \frac{c_A\delta \xi\theta^{2}}{2C_U C_EC_A}\right),\qquad \rho_2^2(\delta)\definedas\left(
1-\frac{\xi\theta^{2}}{2}\right)(1+\delta ),$$
we thus have that
$$[\FF(U_{k+1})-\FF(u)]+\mu \eta_{k+1}^{2} \leq \rho_1^2(\delta)[\FF(U_k)-\FF(u)]+\mu \rho_2^2(\delta) \eta _k^{2}.$$
The proof concludes choosing $\delta > 0$ small enough to satisfy
\begin{equation*}\tag*{\qedsymbol}
0<\rho \definedas\max \{\rho_{1}(\delta),\rho_{2}(\delta)\}<1.
\end{equation*}

The last result, coupled with~\eqref{E:equivalence for error} allows us to conclude that the sequence $\{U_k\}_{k\in\NN_0}$ of discrete solutions obtained through the Adaptive Algorithm converges to the weak solution $u$ of the nonlinear problem~\eqref{E:cont prob}, and moreover, there exists $\rho\in (0,1)$ such that
$$\|\nabla(U_k-u)\|_\Omega\le C \rho^k,\qquad\forall\,k\in\NN_0,$$
for some constant $C>0$. Also, the global error estimators $\{\eta_k\}_{k\in\NN_0}$ tend to zero, and in particular,
$$\eta_k\le C \rho^k,\qquad\forall\,k\in\NN_0,$$
for some constant $C>0$.

\section{Optimality of the total error and optimal marking}

In this section we introduce the notion of \emph{total error}, we show an analogous of Cea's lemma for this new notion (see Lemma~\ref{L:generalized Cea}) and a result about \emph{optimal marking} (see Lemma~\ref{L:optimal marking}). Both of them will be very important to establish a control of marked elements in each step of the adaptive procedure (cf. Lemma~\ref{L:cardinality} in Section~\ref{S:optimality}).

We first present an auxiliary result that will allow us to show the analogous of Cea's lemma for the \emph{total error}. Its proof is an immediate consequence of Theorem~\ref{T:equivalence for error} and will thus be omitted.

\begin{lemma}[Quasi-orthogonality property in a mesh]\label{L:quasi-ortho mesh}
If $U\in\VV_\Tau$ denotes the solution of the discrete problem~\eqref{E:disc prob} for some $\Tau\in\TT$, then
$$\|\nabla (U-u)\|_\Omega^2+\|\nabla (U-V)\|_\Omega^2\le \frac{C_A}{c_A}\|\nabla (V-u)\|_\Omega^2,\qquad\forall\,V\in\VV_\Tau,$$
where $C_A$ and $c_A$ are the constants appearing in~\eqref{E:A Lipschitz} and~\eqref{E:A strongly mon}.
\end{lemma}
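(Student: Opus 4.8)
The plan is to derive Lemma~\ref{L:quasi-ortho mesh} directly from Theorem~\ref{T:equivalence for error} by applying it twice: once over the closed subspace $\WW=\VV_\Tau$ (where $U$ is the minimizer of $\FF$) and once over the whole space $\WW=H^1_0(\Omega)$ (where $u$ is the minimizer of $\FF$). The key algebraic identity is the trivial decomposition $\FF(V)-\FF(u)=[\FF(V)-\FF(U)]+[\FF(U)-\FF(u)]$, which plays here the role that Galerkin orthogonality (Pythagoras, \eqref{E:Galerkin orthogonality}) plays in the linear theory, but now at the level of energies rather than norms.

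First I would note that $V\in\VV_\Tau$ is admissible in the discrete minimization, so Theorem~\ref{T:equivalence for error} with $\WW=\VV_\Tau$ and $w=U$ gives $\FF(V)-\FF(U)\ge\frac{c_A}{2}\|\nabla(U-V)\|_\Omega^2$. Next, since both $U$ and $V$ lie in $H^1_0(\Omega)$ and $u$ minimizes $\FF$ over all of $H^1_0(\Omega)$, Theorem~\ref{T:equivalence for error} with $\WW=H^1_0(\Omega)$ and $w=u$ yields both $\FF(U)-\FF(u)\ge\frac{c_A}{2}\|\nabla(U-u)\|_\Omega^2$ and $\FF(V)-\FF(u)\le\frac{C_A}{2}\|\nabla(V-u)\|_\Omega^2$. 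Adding the two lower bounds and using the decomposition identity,
\[
\frac{c_A}{2}\|\nabla(U-u)\|_\Omega^2+\frac{c_A}{2}\|\nabla(U-V)\|_\Omega^2\le[\FF(U)-\FF(u)]+[\FF(V)-\FF(U)]=\FF(V)-\FF(u)\le\frac{C_A}{2}\|\nabla(V-u)\|_\Omega^2,
\]
and dividing through by $\frac{c_A}{2}$ gives exactly the claimed inequality. This is essentially a three-line argument once the two instances of Theorem~\ref{T:equivalence for error} are in place, which is why the paper says the proof ``will thus be omitted.''

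There is no real obstacle here; the only subtlety worth flagging is that the lower bound must be applied with $\WW=\VV_\Tau$ (so that $U$ is genuinely the minimizer and $\psi'(0)=0$ holds for the line segment from $U$ to $V$), while the upper bound on $\FF(V)-\FF(u)$ must be applied with $\WW=H^1_0(\Omega)$ (so that $V$ is still an admissible competitor against the minimizer $u$). Both uses are legitimate because $\VV_\Tau\subset H^1_0(\Omega)$ is closed and $U$ is the Galerkin solution \eqref{E:disc prob}, equivalently the minimizer of $\FF$ over $\VV_\Tau$. The factor $C_A/c_A$ on the right is precisely the price paid for the lack of exact orthogonality, and it is harmless for the subsequent optimality analysis.
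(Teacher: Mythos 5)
Your proof is correct and is precisely the argument the paper has in mind when it writes that the lemma ``is an immediate consequence of Theorem~\ref{T:equivalence for error} and will thus be omitted'': two applications of that theorem (with $\WW=\VV_\Tau$, $w=U$ for the lower bound on $\FF(V)-\FF(U)$, and with $\WW=H^1_0(\Omega)$, $w=u$ for the other two bounds) combined via the trivial energy decomposition $\FF(V)-\FF(u)=[\FF(V)-\FF(U)]+[\FF(U)-\FF(u)]$. No discrepancy with the paper's approach.
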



Since the global oscillation term is smaller than the global error estimator, that is, $\osci_\Tau(U)\le \eta_\Tau(U)$, using the global upper bound (Theorem~\ref{T:global upper bound}), we have that
$$\|\nabla(U-u)\|_\Omega^2+\osci_\Tau^2(U)\le (C_U+1)\eta_\Tau^2(U),$$
whenever $u$ is the solution of problem~\eqref{E:cont prob} and $U\in\VV_\Tau$ is the solution of the discrete problem~\eqref{E:disc prob}. Taking into account the global lower bound (Theorem~\ref{T:cota inferior global}) we obtain that
$$\eta_\Tau(U)\approx\left(\|\nabla(U-u)\|_\Omega^2+\osci_\Tau^2(U)\right)^{\frac12}.$$
The quantity on the right-hand side is called \emph{total error}, and since adaptive methods are based on the \emph{a posteriori error estimators}, the convergence rate is characterized through properties of the \emph{total error}. 


\begin{remark}(Cea's Lemma)
Taking into account that $A$ is Lipschitz and strongly monotone, it is easy to check that
\begin{equation*}
\|\nabla (U-u)\|_\Omega\le \frac{C_A}{c_A} \inf_{V\in \VV_\Tau}\|\nabla (V-u)\|_\Omega.
\end{equation*}
This estimation is known as Cea's Lemma and shows that the approximation $U$ is optimal (up to a constant) of the solution $u$ from $\VV_\Tau$.
\end{remark}

A generalization of Cea's Lemma for the total error is given in the following

\begin{lemma}[Cea's Lemma for the total error]\label{L:generalized Cea}
If $U\in\VV_\Tau$ denotes the solution of the discrete problem~\eqref{E:disc prob} for some $\Tau\in\TT$, then
\begin{equation*}
\|\nabla(U-u)\|_\Omega^2+\osci_\Tau^2(U)\le\frac{2C_EC_A}{c_A} \inf_{V\in \VV_\Tau}(\|\nabla(V-u)\|_\Omega^2+\osci_\Tau^2(V)),
\end{equation*}
where $C_E>1$ is the constant given in~\eqref{E:bound for gtau}.
\end{lemma}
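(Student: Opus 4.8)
The strategy is to combine the quasi-orthogonality property in a mesh (Lemma~\ref{L:quasi-ortho mesh}) with the oscillation perturbation result (Proposition~\ref{P:perturbacion de la oscilacion}), applied in the special case $\Taustar=\Tau$. First I would fix an arbitrary $V\in\VV_\Tau$ and apply Proposition~\ref{P:perturbacion de la oscilacion} with $\Taustar=\Tau$ (so that $\Tau\cap\Taustar=\Tau$), which gives
$$\osci_\Tau^2(U)\le 2\osci_\Tau^2(V)+2C_E\|\nabla(U-V)\|_\Omega^2.$$
This replaces the oscillation of $U$ by the oscillation of $V$ at the price of the energy of the difference $U-V$.

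Next I would add $\|\nabla(U-u)\|_\Omega^2$ to both sides and observe that, since $C_E>1$, the term $\|\nabla(U-u)\|_\Omega^2+2C_E\|\nabla(U-V)\|_\Omega^2$ is bounded above by $2C_E\big(\|\nabla(U-u)\|_\Omega^2+\|\nabla(U-V)\|_\Omega^2\big)$. At this point Lemma~\ref{L:quasi-ortho mesh} applies directly: the sum $\|\nabla(U-u)\|_\Omega^2+\|\nabla(U-V)\|_\Omega^2$ is controlled by $\frac{C_A}{c_A}\|\nabla(V-u)\|_\Omega^2$. Combining the two displays yields
$$\|\nabla(U-u)\|_\Omega^2+\osci_\Tau^2(U)\le \frac{2C_EC_A}{c_A}\|\nabla(V-u)\|_\Omega^2+2\osci_\Tau^2(V).$$
Since $\frac{2C_EC_A}{c_A}>2$ (again using $C_E>1$ and $C_A\ge c_A$), the right-hand side is at most $\frac{2C_EC_A}{c_A}\big(\|\nabla(V-u)\|_\Omega^2+\osci_\Tau^2(V)\big)$. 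Taking the infimum over all $V\in\VV_\Tau$ gives the claimed bound.

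I expect no serious obstacle here: the result is essentially a bookkeeping exercise that packages the quasi-orthogonality (which stands in for the Pythagoras identity used in the linear theory) together with the oscillation perturbation estimate. The only point requiring a little care is making sure the constants line up so that a single factor $\frac{2C_EC_A}{c_A}$ dominates both the energy and the oscillation contributions; this works precisely because $C_E>1$ and $C_A\ge c_A$, so $\frac{2C_EC_A}{c_A}\ge 2C_E>2$. The use of Proposition~\ref{P:perturbacion de la oscilacion} in the degenerate case $\Taustar=\Tau$ is legitimate since $\Tau$ is trivially a refinement of itself and then $\Tau\cap\Taustar=\Tau$, so $\osci_\Tau^2(V;\Tau\cap\Taustar)=\osci_\Tau^2(V)$.
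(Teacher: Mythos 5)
Your proposal is correct and follows essentially the same argument as the paper: apply Proposition~\ref{P:perturbacion de la oscilacion} with $\Taustar=\Tau$ to bound $\osci_\Tau^2(U)$, then Lemma~\ref{L:quasi-ortho mesh} to absorb the energy terms, using $C_E>1$ and $C_A\ge c_A$ to pull out the single constant $\frac{2C_EC_A}{c_A}$.
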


\begin{proof}
Let $\Tau\in\TT$ and let $U\in\VV_\Tau$ be the solution of the discrete problem~\eqref{E:disc prob}. If $V\in\VV_\Tau$, using Proposition~\ref{P:perturbacion de la oscilacion} with $\Taustar=\Tau$ and Lemma~\ref{L:quasi-ortho mesh} we have that
\begin{align*}
\|\nabla (U-u)\|_\Omega^2+\osci_\Tau^2(U)&\le \|\nabla (U-u)\|_\Omega^2+2\osci_\Tau^2(V)+2C_E\|\nabla (V-U)\|_\Omega^2\\
&\le 2C_E\frac{C_A}{c_A}\|\nabla (V-u)\|_\Omega^2+2\osci_\Tau^2(V)\\
&\le \frac{2C_EC_A}{c_A}\left(\|\nabla (V-u)\|_\Omega^2+\osci_\Tau^2(V)\right).
\end{align*}
Since $V\in\VV_\Tau$ is arbitrary, the claim of this lemma follows.
\end{proof}

The following result establishes a link between nonlinear approximation theory and AFEM through D\"orfler's marking strategy. Roughly speaking, it is a reciprocal to the contraction property (Theorem~\ref{T:propiedad de contraccion}). More precisely, we prove that if there exists a suitable total error reduction from $\Tau$ to a refinement $\Taustar$, then the error indicators of the refined elements from $\Tau$ must satisfy a D\"orfler's property. In other words, D\"orfler's marking and total error reduction are intimately connected. This result is known as \emph{optimal marking} and was first proved for linear elliptic problems by Stevenson~\cite{Stevenson}. The notion of total error presented above was first introduced by Casc\'on et al.~\cite{CKNS-quasi-opt} for linear problems, together with the appropriate optimal marking result, which we mimic here.

In order to prove the \emph{optimal marking} result we assume that the marking parameter $\theta$ satisfies 
\begin{equation}\label{E:restriccion theta}
 0<\theta<\theta_0 \definedas  \left[\frac{C_L}{1+2C_{LU}(1+C_E)}\right]^{1/2},
\end{equation}
where $C_L$, $C_{LU}$ are the constants appearing in the global lower bound (Theorem~\ref{T:cota inferior global}) and in the localized upper bound (Theorem~\ref{T:localized upper bound}), respectively, and $C_E$ is the constant appearing in~\eqref{E:bound for gtau}.

\begin{lemma}[Optimal marking]\label{L:optimal marking}
Let $\Tau\in\TT$ and let $\Taustar\in\TT$ be a refinement of $\Tau$. Let $\mathcal{R}$ denote the subset of $\Tau$ consisting of the elements  which were refined to obtain $\Taustar$, i.e., $\mathcal{R} = \Tau \setminus \Taustar$. Assume that the marking parameter $\theta$ satisfies $0<\theta<\theta_0$ and define $\nu\definedas \frac12\big(1-\frac{\theta^2}{\theta_0^2}\big)>0$. Let $U$ and $U_*$ be the solutions of the discrete problem~\eqref{E:disc prob} in $\VV_\Tau$ and $\VV_\Taustar$, respectively. If
\begin{equation}\label{E:reduccion del error}
\|\nabla(U_*-u)\|_\Omega^2+\osci_\Taustar^2(U_*)\leq \nu\left( \|\nabla (U-u)\|_\Omega^2+\osci_\Tau^2(U)\right),
\end{equation}
then
$$\est{U}{\mathcal{R}}\geq \theta \gest{U}.$$
\end{lemma}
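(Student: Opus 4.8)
The plan is to argue by contrapositive in the standard Stevenson/CKNS fashion, exploiting the three a~posteriori ingredients already at hand: the global lower bound (Theorem~\ref{T:cota inferior global}) on the refined set, the localized upper bound (Theorem~\ref{T:localized upper bound}) on $\mathcal{R}$, and the oscillation perturbation (Proposition~\ref{P:perturbacion de la oscilacion}) to transfer oscillation from $\Taustar$ back to $\Tau$. Concretely, I would suppose $\est{U}{\mathcal R}<\theta\,\gest{U}$ and derive that~\eqref{E:reduccion del error} cannot hold, i.e.\ the total error on $\Taustar$ is bounded below by $\nu$ times the total error on $\Tau$.

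First I would estimate the total error on the coarse mesh. Split $\Tau = \mathcal R \,\dot\cup\, (\Tau\cap\Taustar)$. On the unrefined part, $\osci_\Tau^2(U;\Tau\cap\Taustar)\le 2\osci_\Taustar^2(U_*;\Tau\cap\Taustar)+2C_E\|\nabla(U_*-U)\|_\Omega^2$ by Proposition~\ref{P:perturbacion de la oscilacion}, and $\|\nabla(U-U_*)\|_\Omega^2\le C_{LU}\,\est{U}{\mathcal R}^2$ by Theorem~\ref{T:localized upper bound}. For the full energy error, write $\|\nabla(U-u)\|_\Omega\le\|\nabla(U-U_*)\|_\Omega+\|\nabla(U_*-u)\|_\Omega$ so that $\|\nabla(U-u)\|_\Omega^2\le 2\|\nabla(U-U_*)\|_\Omega^2+2\|\nabla(U_*-u)\|_\Omega^2\le 2C_{LU}\,\est{U}{\mathcal R}^2+2\|\nabla(U_*-u)\|_\Omega^2$. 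Then I would invoke the global lower bound on the coarse mesh, $C_L\gest{U}^2\le\|\nabla(U-u)\|_\Omega^2+\osci_\Tau^2(U)$, and bundle everything: $\osci_\Tau^2(U)=\est{U}{\mathcal R}_{\text{osc}}$-part on $\mathcal R$ is bounded by $\est{U}{\mathcal R}^2$ (since oscillation indicators are dominated by error indicators elementwise), which together with the pieces above yields
$$C_L\gest{U}^2 \le \big(1+2C_{LU}(1+C_E)\big)\est{U}{\mathcal R}^2 + 2\big(\|\nabla(U_*-u)\|_\Omega^2+\osci_\Taustar^2(U_*)\big).$$
Now using the contrapositive hypothesis $\est{U}{\mathcal R}^2<\theta^2\gest{U}^2$ and the definition~\eqref{E:restriccion theta} of $\theta_0$, the first term on the right is $<\big(\theta^2/\theta_0^2\big)C_L\gest{U}^2$, so
$$\big(1-\tfrac{\theta^2}{\theta_0^2}\big)C_L\gest{U}^2 < 2\big(\|\nabla(U_*-u)\|_\Omega^2+\osci_\Taustar^2(U_*)\big).$$
Since $C_L\gest{U}^2\le\|\nabla(U-u)\|_\Omega^2+\osci_\Tau^2(U)$ and $2\nu=1-\theta^2/\theta_0^2$, this reads $\|\nabla(U_*-u)\|_\Omega^2+\osci_\Taustar^2(U_*)>\nu\big(\|\nabla(U-u)\|_\Omega^2+\osci_\Tau^2(U)\big)$, contradicting~\eqref{E:reduccion del error}. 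Hence $\est{U}{\mathcal R}\ge\theta\,\gest{U}$.

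The one point needing care — the only place the nonlinearity intervenes — is the constant $C_E$ in Proposition~\ref{P:perturbacion de la oscilacion}, which controls how oscillation on $\Tau\cap\Taustar$ relates to oscillation on $\Taustar$ plus the energy of $U-U_*$; this is exactly what Lemma~\ref{L:bound for gtau} was engineered to provide, so the argument goes through verbatim once that lemma is in hand. The bookkeeping of the $2$'s from the triangle/Young inequalities must be matched precisely against the definition of $\theta_0$ in~\eqref{E:restriccion theta}; that is the main (routine) obstacle.
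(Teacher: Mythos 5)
Your intermediate inequality
\[
\|\nabla(U-u)\|_\Omega^2+\osci_\Tau^2(U)\le\big(1+2C_{LU}(1+C_E)\big)\est{U}{\mathcal{R}}^2+2\big(\|\nabla(U_*-u)\|_\Omega^2+\osci_\Taustar^2(U_*)\big)
\]
is correct and is a rearrangement of the chain the paper derives; the decomposition of $\Tau$ into $\mathcal{R}$ and $\Tau\cap\Taustar$, the use of Proposition~\ref{P:perturbacion de la oscilacion}, Theorem~\ref{T:localized upper bound}, and $\osci_\Tau^2(U;T)\le\eta_\Tau^2(U;T)$ on $\mathcal{R}$ all match the paper's. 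The gap is in how you invoke the global lower bound. You replace the left side with $C_L\gest{U}^2$ \emph{before} applying the contrapositive hypothesis, deduce $2\nu C_L\gest{U}^2<2\big(\|\nabla(U_*-u)\|_\Omega^2+\osci_\Taustar^2(U_*)\big)$, and then cite $C_L\gest{U}^2\le\|\nabla(U-u)\|_\Omega^2+\osci_\Tau^2(U)$ to conclude $\nu\big(\|\nabla(U-u)\|_\Omega^2+\osci_\Tau^2(U)\big)<\|\nabla(U_*-u)\|_\Omega^2+\osci_\Taustar^2(U_*)$. That last step is a non sequitur: the cited inequality makes $\nu C_L\gest{U}^2$ a \emph{lower} bound for $\nu\big(\|\nabla(U-u)\|_\Omega^2+\osci_\Tau^2(U)\big)$, and you have just shown it is also a lower bound for $\|\nabla(U_*-u)\|_\Omega^2+\osci_\Taustar^2(U_*)$; two quantities bounded below by the same third quantity are not comparable. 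To close your version you would need the reverse inequality $\|\nabla(U-u)\|_\Omega^2+\osci_\Tau^2(U)\lesssim\gest{U}^2$, which drags in $C_U$ and does not reproduce the $\theta_0$ of~\eqref{E:restriccion theta}.

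The fix is to keep $\|\nabla(U-u)\|_\Omega^2+\osci_\Tau^2(U)$ on the left and apply the global lower bound only \emph{after} the contrapositive hypothesis. From $\est{U}{\mathcal{R}}^2<\theta^2\gest{U}^2$ and $1+2C_{LU}(1+C_E)=C_L/\theta_0^2$, the first term on the right is $<\frac{\theta^2}{\theta_0^2}C_L\gest{U}^2\le\frac{\theta^2}{\theta_0^2}\big(\|\nabla(U-u)\|_\Omega^2+\osci_\Tau^2(U)\big)$, which absorbs into the left side to give $2\nu\big(\|\nabla(U-u)\|_\Omega^2+\osci_\Tau^2(U)\big)<2\big(\|\nabla(U_*-u)\|_\Omega^2+\osci_\Taustar^2(U_*)\big)$, negating~\eqref{E:reduccion del error}. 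The paper proves the lemma directly rather than by contrapositive, starting from $(1-2\nu)C_L\gest{U}^2\le(1-2\nu)\big(\|\nabla(U-u)\|_\Omega^2+\osci_\Tau^2(U)\big)$ and then inserting~\eqref{E:reduccion del error}; it is the same chain read forwards, and the only thing your draft misplaces is the point at which the global lower bound enters.
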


\begin{proof}
Let $\Tau$, $\Taustar$, $\mathcal{R}$, $U$, $U_*$, $\theta$ and $\nu$ be as in the assumptions. Using~\eqref{E:reduccion del error} and the global lower bound (Theorem~\ref{T:cota inferior global}) we obtain that
\begin{align}
(1-2\nu )C_{L}\eta_\Tau^2(U)&\leq (1-2\nu )\left(\|\nabla (U-u)\|_\Omega^2+\osci_\Tau^2(U)\right) \notag\\
&\leq \|\nabla (U-u)\|_\Omega^2-2\|\nabla(U_*-u)\|_\Omega^2+\osci_\Tau^2(U) -2\osci_\Taustar^2(U_*).\label{E:aux opt marking}
\end{align}
Since $\|\nabla (U-u)\|_\Omega\le \|\nabla(U_*-u)\|_\Omega+\|\nabla(U_*-U)\|_\Omega$, we have that \begin{equation}\label{E:casi orto}
\|\nabla (U-u)\|_\Omega^2 - 2\|\nabla(U_*-u)\|_\Omega^2 
\le 2\|\nabla(U_*-U)\|_\Omega^2.
\end{equation}
Using Proposition~\ref{P:perturbacion de la oscilacion} and that $\osci_\Tau^2(U;T)\leq \eta_\Tau^2(U;T)$, if $T\in\mathcal{R}=\Tau\setminus\Taustar$; for the oscillation terms we obtain that
$$\osci_\Tau^2(U) -2\osci_\Taustar^2(U_*)\leq 2C_E\|\nabla(U_*-U)\|_\Omega^2+\eta_\Tau^2(U;\mathcal{R}).$$
Taking into account~\eqref{E:casi orto} and the last inequality, from~\eqref{E:aux opt marking} it follows that
$$(1-2\nu )C_{L}\eta_\Tau^2(U)\leq
2\|\nabla(U-U_*)\|_\Omega^{2}+2C_E\|\nabla(U-U_*)\|_\Omega^{2}+\eta_\Tau^2(U;\mathcal{R}),$$ and using the localized upper bound (Theorem~\ref{T:localized upper bound}) we have that
\begin{align*}
(1-2\nu )C_{L}\eta_\Tau^2(U)&\leq 2(1+C_E)
C_{LU}\eta_\Tau^2(U;\mathcal{R})+\eta_\Tau^2(U;\mathcal{R})= (1+2C_{LU}(1+C_E))\eta_\Tau^2(U;\mathcal{R}).
\end{align*}
Finally,
\[
\frac{(1-2\nu)C_L}{1+2C_{LU}(1+C_E)}\eta_\Tau^2(U)\leq \eta_\Tau^2(U;\mathcal{R}),
\]
which completes the proof since $\frac{(1-2\nu)C_L}{1+2C_{LU}(1+C_E)}=(1-2\nu)\theta_0^2=\theta^2$ by the definition of $\nu$.
\end{proof}

\section{Quasi-optimality of the adaptive FEM}\label{S:optimality}

In this section we state the second main result of this article, that is, the adaptive sequence computed through the Adaptive Algorithm converges with optimal rate to the weak solution of the nonlinear problem~\eqref{E:cont prob}.
For $N\in\NN_0$, let $\TT_N$ be the set of all possible
conforming triangulations generated by refinement from $\Tau_0$  with at most
$N$ elements more than $\Tau_{0}$, i.e.,
\[
\TT_N\definedas\{\Tau\in \TT\mid\quad\#\Tau-\#\Tau_0\le N\}.
\]
The quality of the best approximation in $\mathbb{T}_N$ is given by
\[
\sigma_N(u)\definedas\inf_{\Tau\in \mathbb{T}_N}\inf_{V\in \VV_\Tau}\left[\|\nabla (V-u)\|_\Omega^2+\osci_\Tau^2(V)\right]^{\frac12}.
\]
For $s>0$, we say that $u\in \mathbb{A}_s$ if  
\begin{equation}\label{E:As}
|u|_{s}\definedas\sup_{N\in\NN_0}\left\{ (N+1)^{s}\sigma_N (u)\right\} <\infty.
\end{equation}
In other words, $u$ belongs to the class $\mathbb{A}_s$ if can be \emph{ideally} approximated with adaptive meshes at a rate $(DOFs)^{-s}$. From another perspective, if $u \in \mathbb{A}_s$, then for each $\varepsilon > 0$ there exist a mesh $\Tau_\varepsilon \in \TT$ and a function $V_\varepsilon \in \VV_{\Tau_\varepsilon}$ such that 
\[
\#\Tau_{\varepsilon }-\#\Tau_{0}\le |u|_{s}^{\frac{1}{s}}\varepsilon^{-\frac{1}{s}}\quad\text{ and }\quad
\|\nabla(V_\varepsilon-u)\|_\Omega^2+\osci_{\Tau_\varepsilon}^2(V_\varepsilon)\le \varepsilon^2.
\]
The study of classes of functions that will yield such rates is beyond the scope of this article. Some results along this direction can be found in~\cite{BDDP,gaspoz-morin,gaspoz-morin-ho}.

The following result proved in~\cite{Stevenson,CKNS-quasi-opt}, provides a bound for the complexity of the overlay of two triangulations $\Tau^1$ and $\Tau^2$ obtained as refinements of $\Tau_{0}$.

\begin{lemma}[Overlay of triangulations]\label{L:overlay}
For $\Tau^{1},\Tau^{2}\in \TT$ the overlay $\Tau:=\Tau^{1}\oplus \Tau^{2}\in\TT$, defined as the smallest admissible triangulation which is a refinement of $\Tau^1$ and $\Tau^2$, satisfies
\[
\#\Tau\le\#\Tau^1+\#\Tau^2-\#\Tau_{0}.
\]
\end{lemma}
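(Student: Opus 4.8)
The plan is to exploit the structure of bisection refinement: every triangulation in $\TT$ is obtained from $\Tau_0$ by a sequence of bisections, and the set of newest-vertex-bisection subdivisions of $\Tau_0$ forms a forest under the refinement relation. I would first recall the standard correspondence between a triangulation $\Tau\in\TT$ and a finite set of bisections applied to $\Tau_0$; equivalently, one associates to $\Tau$ a finite subtree of the infinite master forest generated by bisecting $\Tau_0$ indefinitely, whose leaves are exactly the elements of $\Tau$. The overlay $\Tau^1\oplus\Tau^2$ is then the triangulation corresponding to the union of the two subtrees, i.e.\ its leaves are the \emph{maximal} elements (coarsest simplices) that are common refinements of a leaf of $\Tau^1$ and a leaf of $\Tau^2$; this is why $\Tau^1\oplus\Tau^2$ is the smallest common conforming refinement and why it belongs to $\TT$.

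The key counting step is the following: for any finite subtree $F$ of a binary forest with root set $R$ (here $R=\Tau_0$, viewed as the set of root simplices), the number of leaves of $F$ equals $\#R$ plus the number of internal (non-leaf) nodes of $F$ that have been bisected — because each bisection replaces one leaf by two, a net gain of one. Writing $\mathrm{leaves}(F)$ and $\mathrm{int}(F)$ for these counts, we have $\#\Tau^i = \#\Tau_0 + \mathrm{int}(F_i)$ for $i=1,2$, where $F_i$ is the subtree associated to $\Tau^i$. The subtree $F$ associated to the overlay satisfies $F = F_1\cup F_2$ as sets of nodes, hence $\mathrm{int}(F)=\mathrm{int}(F_1\cup F_2)\le \mathrm{int}(F_1)+\mathrm{int}(F_2)$, since an internal node of the union is an internal node of $F_1$ or of $F_2$ (a node bisected in the union must have been bisected in at least one of the two original subtrees). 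Combining,
\[
\#\Tau \;=\; \#\Tau_0 + \mathrm{int}(F) \;\le\; \#\Tau_0 + \mathrm{int}(F_1) + \mathrm{int}(F_2) \;=\; \#\Tau^1 + \#\Tau^2 - \#\Tau_0,
\]
which is the claimed bound.

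The main obstacle is purely bookkeeping: making the identification of triangulations in $\TT$ with subtrees of the master forest precise, and verifying that the overlay really corresponds to the union of subtrees (in particular that this union is again a valid conforming triangulation in $\TT$, so that completion to conformity adds no further elements). This relies on the properties of Stevenson's bisection rule and its compatibility with overlays established in~\cite{Stevenson-refine,Stevenson}; once that framework is in place, the inequality $\mathrm{int}(F_1\cup F_2)\le\mathrm{int}(F_1)+\mathrm{int}(F_2)$ and the leaf-counting identity are elementary. Since this result is stated in the excerpt as quoted from~\cite{Stevenson,CKNS-quasi-opt}, I would in fact simply cite those references for the proof rather than reproduce the forest argument in full.
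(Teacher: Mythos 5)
The paper does not prove this lemma at all: it simply states it and cites~\cite{Stevenson,CKNS-quasi-opt}, which is exactly what you say you would do in practice. Your forest/subtree sketch correctly reproduces the standard argument from those references — in particular the leaf-count identity $\#\Tau = \#\Tau_0 + \mathrm{int}(F)$, the observation that an internal node of $F_1\cup F_2$ must be internal in $F_1$ or in $F_2$, and the correct flagging of the genuinely nontrivial step (that the union of subtrees is again conforming under Stevenson's labeling, so no completion is needed) — so the proposal is correct and takes the same route as the paper.
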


The next lemma is essential for proving the main result below (see Theorem~\ref{T:opt_main}).

\begin{lemma}[Cardinality of $\MM_k$]\label{L:cardinality}
Let us assume that the weak solution $u$ of problem~\eqref{E:cont prob} belongs to $\mathbb{A}_{s}$. If the marking parameter $\theta$ satisfies $0<\theta<\theta_0$ (cf.~\eqref{E:restriccion theta}), then
$$
\#\MM_k\le \left(\frac{2C_EC_A}{\nu c_A}\right)^{\frac{1}{2s}}|u|_{s}^{\frac{1}{s}}\left[\|\nabla(U_k-u)\|_\Omega^2+\osci_{\Tau_k}^2(U_k)\right]^{-\frac{1}{2s}},\qquad\forall\,k\in\NN_0,
$$
where $\nu=\frac12\left(1-\frac{\theta^2}{\theta_0^2}\right)$ as in Lemma~\ref{L:optimal marking}.
\end{lemma}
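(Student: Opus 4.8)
The strategy is the standard one from \cite{CKNS-quasi-opt}, adapted to the nonlinear setting via the total-error machinery already developed above. The idea is to combine three ingredients: (i) the fact that $u\in\mathbb{A}_s$ guarantees a near-optimal mesh realizing a prescribed tolerance $\varepsilon$; (ii) the optimal marking result (Lemma~\ref{L:optimal marking}), which says that a sufficient reduction of the total error from one mesh to a refinement forces the refined elements to satisfy D\"orfler's bulk criterion with parameter $\theta$; and (iii) the generalized C\'ea's Lemma (Lemma~\ref{L:generalized Cea}), which controls the total error at step $k$ by the best total error over $\VV_{\Tau_k}$. Since the module \textsf{MARK} chooses $\MM_k$ of \emph{minimal} cardinality among all sets satisfying~\eqref{E:dorfler}, exhibiting \emph{any} set of controlled size that realizes D\"orfler's criterion bounds $\#\MM_k$.

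First I would fix $k\in\NN_0$ and set $\varepsilon^2\definedas\frac{\nu c_A}{2C_EC_A}\big(\|\nabla(U_k-u)\|_\Omega^2+\osci_{\Tau_k}^2(U_k)\big)$. Since $u\in\mathbb{A}_s$, there exist a mesh $\Tau_\varepsilon\in\TT$ and $V_\varepsilon\in\VV_{\Tau_\varepsilon}$ with $\#\Tau_\varepsilon-\#\Tau_0\le|u|_s^{1/s}\varepsilon^{-1/s}$ and $\|\nabla(V_\varepsilon-u)\|_\Omega^2+\osci_{\Tau_\varepsilon}^2(V_\varepsilon)\le\varepsilon^2$. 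Next, form the overlay $\Taustar\definedas\Tau_k\oplus\Tau_\varepsilon$; by Lemma~\ref{L:overlay}, $\#\Taustar-\#\Tau_k\le\#\Tau_\varepsilon-\#\Tau_0\le|u|_s^{1/s}\varepsilon^{-1/s}$. Let $U_*\in\VV_\Taustar$ be the discrete solution on $\Taustar$. Because $\Taustar$ is a refinement of $\Tau_\varepsilon$ we have $V_\varepsilon\in\VV_\Taustar$, so applying Lemma~\ref{L:generalized Cea} on the mesh $\Taustar$ and using $V_\varepsilon$ as the competitor gives
\begin{equation*}
\|\nabla(U_*-u)\|_\Omega^2+\osci_\Taustar^2(U_*)\le\frac{2C_EC_A}{c_A}\big(\|\nabla(V_\varepsilon-u)\|_\Omega^2+\osci_\Taustar^2(V_\varepsilon)\big).
\end{equation*}
Since $\Taustar$ refines $\Tau_\varepsilon$, oscillation does not increase on the retained elements and one has $\osci_\Taustar^2(V_\varepsilon)\le\osci_{\Tau_\varepsilon}^2(V_\varepsilon)$ (this is the elementary monotonicity of oscillation under refinement, used implicitly in \cite{CKNS-quasi-opt}); hence the right-hand side is $\le\frac{2C_EC_A}{c_A}\varepsilon^2=\nu\big(\|\nabla(U_k-u)\|_\Omega^2+\osci_{\Tau_k}^2(U_k)\big)$. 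This is exactly hypothesis~\eqref{E:reduccion del error} of Lemma~\ref{L:optimal marking} with $\Tau=\Tau_k$, $U=U_k$, so that lemma yields $\eta_{\Tau_k}(U_k;\mathcal{R})\ge\theta\,\eta_{\Tau_k}(U_k)$, where $\mathcal{R}=\Tau_k\setminus\Taustar$ is the set of elements of $\Tau_k$ refined in passing to $\Taustar$.

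Finally, $\mathcal{R}$ is a set satisfying D\"orfler's criterion~\eqref{E:dorfler} on $\Tau_k$, and the \textsf{MARK} module selects a set $\MM_k$ of minimal cardinality with this property, so $\#\MM_k\le\#\mathcal{R}\le\#\Taustar-\#\Tau_k\le|u|_s^{1/s}\varepsilon^{-1/s}$. Substituting the value of $\varepsilon$ gives precisely the claimed bound
\begin{equation*}
\#\MM_k\le|u|_s^{1/s}\varepsilon^{-1/s}=\Big(\frac{2C_EC_A}{\nu c_A}\Big)^{\frac{1}{2s}}|u|_s^{\frac1s}\big[\|\nabla(U_k-u)\|_\Omega^2+\osci_{\Tau_k}^2(U_k)\big]^{-\frac{1}{2s}}.
\end{equation*}
The only mildly delicate points, where the nonlinearity intervenes, are (a) that C\'ea's Lemma for the total error must be invoked in the form of Lemma~\ref{L:generalized Cea}, whose proof already absorbed the loss of Galerkin orthogonality into the factor $2C_EC_A/c_A$, and (b) that $\#\mathcal{R}\le\#\Taustar-\#\Tau_k$, which holds because each element of $\mathcal{R}$ is properly subdivided in $\Taustar$ and hence accounts for at least one new element; both are routine. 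The expected main obstacle is bookkeeping the constants so that the threshold $\theta_0$ in~\eqref{E:restriccion theta} and the constant $2C_EC_A/c_A$ line up correctly with the definition of $\nu$, but no new idea is needed beyond what is assembled in the preceding lemmas.
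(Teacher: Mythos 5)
Your proof is correct and follows essentially the same route as the paper: choose $\varepsilon$ proportional to the current total error (with the factor $\nu c_A/(2C_E C_A)$), invoke membership in $\mathbb{A}_s$ to get $\Tau_\varepsilon,V_\varepsilon$, pass to the overlay $\Taustar=\Tau_k\oplus\Tau_\varepsilon$, apply the generalized C\'ea's Lemma (Lemma~\ref{L:generalized Cea}) together with monotonicity of oscillation under refinement to verify hypothesis~\eqref{E:reduccion del error}, then Lemma~\ref{L:optimal marking} shows $\mathcal{R}=\Tau_k\setminus\Taustar$ satisfies D\"orfler, and minimality of $\MM_k$ plus the overlay count Lemma~\ref{L:overlay} close the argument. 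The only cosmetic difference from the paper's write-up is that you fix $\varepsilon$ explicitly at the outset rather than leaving it to be determined by the resulting identity; the estimates and constants line up identically.
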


\proof
Let $k\in\NN_0$ be fixed. Let $\varepsilon=\varepsilon(k) >0$ be a tolerance to be fixed later. Since $u\in \mathbb{A}_{s}$, there exist a mesh $\Tau_{\varepsilon}\in\mathbb{T}$ and a function $V_\varepsilon\in \VV_{\Tau_\varepsilon}$ such that
\[
\#\Tau_{\varepsilon }-\#\Tau_{0}\le |u|_{s}^{\frac{1}{s}}\varepsilon^{-\frac{1}{s}}\quad\text{ and }\quad
\|\nabla(V_\varepsilon-u)\|_\Omega^2+\osci_{\Tau_\varepsilon}^2(V_\varepsilon)\le \varepsilon^2.
\]

Let $\Taustar\definedas\Tau_{\varepsilon}\oplus \Tau_k$ the overlay of $\Tau_{\varepsilon}$ and $\Tau_k$ (cf. Lemma~\ref{L:overlay}). Since $V_\varepsilon\in\VV_\Taustar$, we have that $\osci_{\Tau_\varepsilon}(V_\varepsilon)\ge\osci_{\Taustar}(V_\varepsilon)$, and from Lemma~\ref{L:generalized Cea}, if $U_*\in\VV_\Taustar$ denotes the solution of the discrete problem~\eqref{E:disc prob} in $\VV_\Taustar$, we obtain that
\[
\|\nabla(U_*-u)\|_\Omega^2+\osci_{\Taustar}^2(U_*)\le2C_E\frac{C_A}{c_A}\left(\|\nabla(V_\varepsilon-u)\|_\Omega^2+\osci_{\Tau_\varepsilon}^2(V_\varepsilon)\right)\le 2C_E\frac{C_A}{c_A}\varepsilon^2.
\]
Let $\varepsilon$ be such that $$\|\nabla(U_*-u)\|_\Omega^2+\osci_{\Taustar}^2(U_*)\le\nu \left(\|\nabla(U_k-u)\|_\Omega^2+\osci_{\Tau_k}^2(U_k)\right)=2C_E\frac{C_A}{c_A}\varepsilon^2,$$ where $\nu$ is the constant given by Lemma~\ref{L:optimal marking}. Thus, this lemma yields
\[
\eta_{\Tau_k}(U_k;\mathcal{R}_k)\geq \theta\eta_{\Tau_k}(U_k),
\]
if $\mathcal{R}_k$ denotes the subset of $\Tau_k$ consisting of elements which were refined to get $\Taustar$. Taking into account that $\MM_k$ is a \emph{minimal} subset of $\Tau_k$ satisfying the D\"orfler's criterion, using Lemma~\ref{L:overlay} and recalling the choice of $\varepsilon$ we conclude that
\begin{align*}
\#\MM_k &\leq \#\mathcal{R}_k\leq \#\Taustar-\#\Tau_k \leq \#\Tau_{\varepsilon}-\#\Tau_{0}\le |u|_{s}^{\frac{1}{s}}\varepsilon^{-\frac{1}{s}}\\
&= \left(\frac{2C_EC_A}{\nu c_A}\right)^{\frac{1}{2s}}|u|_{s}^{\frac{1}{s}}\left(\|\nabla(U_k-u)\|_\Omega^2+\osci_{\Tau_k}^2(U_k)\right)^{-\frac{1}{2s}}.\tag*{\qedsymbol}
\end{align*}

The next result bounds the complexity of a mesh $\Tau_k$ in terms of the number of elements that were marked from the beginning of the iterative process, assuming that all the meshes were obtained by the bisection algorithm of~\cite{Stevenson-refine}, and that the initial mesh was properly labeled (satisfying condition (b) of Section 4 in~\cite{Stevenson-refine}).

\begin{lemma}[Complexity of \textsf{REFINE}]\label{L:stevenson}
Let us assume that $\Tau_0$ satisfies the labeling condition (b) of Section 4 in Ref.~\cite{Stevenson-refine}, and consider the sequence $\{\Tau_k\}_{k\in\NN_0}$ of refinements of $\Tau_0$ where $\Tau_{k+1}\definedas \textsf{REFINE}(\Tau_k,\MM_k,n)$ with $\MM_k\subset\Tau_k$. Then, there exists a constant $C_S>0$ solely depending on $\Tau_0$ and the number of refinements $\nref$ performed by \textsf{REFINE} to marked elements, such that
$$\#\Tau_k-\#\Tau_{0}\le C_S \sum_{i=0}^{k-1} \#\MM_i,\qquad\text{for all } k\in\NN.$$
\end{lemma}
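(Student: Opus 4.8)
The statement is a mild generalization, to $n\ge 1$ bisections per marked element, of the refinement-complexity estimate proved by Stevenson~\cite{Stevenson-refine} (and quoted in~\cite{CKNS-quasi-opt}) for the case $n=1$. The plan is therefore: (i) invoke the $n=1$ estimate, and (ii) reduce the general case to it by \emph{unrolling} each $n$-fold bisection into $n$ successive single bisections and keeping track of how many elements get marked along the way.

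\textbf{Step 1 (the case $n=1$).} Because $\Tau_0$ satisfies the labeling condition (b) of Section~4 of~\cite{Stevenson-refine}, that reference provides a constant $C_0$, depending only on $\Tau_0$ (hence on $d$ through the family $\TT$), with the following property: whenever $\{\Tau^{(m)}\}_{m=0}^{M}\subset\TT$ is a sequence with $\Tau^{(0)}=\Tau_0$ and $\Tau^{(m+1)}$ obtained from $\Tau^{(m)}$ by bisecting once each element of a subset $\widehat\MM_m\subset\Tau^{(m)}$ and then performing the conforming completion of~\cite{Stevenson-refine}, one has $\#\Tau^{(M)}-\#\Tau_0\le C_0\sum_{m=0}^{M-1}\#\widehat\MM_m$.

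\textbf{Step 2 (unrolling).} Bisecting one $d$-simplex $n$ times produces $2^n$ children and can be realized by $n$ ``sweeps'': the $j$-th sweep bisects once the $2^{j-1}$ simplices produced by the $(j-1)$-th. Applying this simultaneously to all $T\in\MM_i$, the $i$-th call $\textsf{REFINE}(\Tau_i,\MM_i,n)$ is replaced by $n$ single-bisection steps whose marked sets have cardinalities $2^{j-1}\#\MM_i$ for $j=1,\dots,n$, summing to $(2^{n}-1)\#\MM_i$; inserting a conforming completion after every sweep only refines the mesh further and keeps it in $\TT$. Concatenating over $i=0,\dots,k-1$ yields a sequence of the type in Step~1 with $M=nk$ and $\sum_{m}\#\widehat\MM_m\le(2^{n}-1)\sum_{i=0}^{k-1}\#\MM_i$, whose terminal mesh $\Tau^{(M)}$ is a conforming refinement of $\Tau_k$: completing after each sweep rather than once at the end never undoes a bisection, so at each stage the mesh obtained dominates elementwise the minimal conforming completion defining $\Tau_{i+1}$, and hence $\#\Tau_k\le\#\Tau^{(M)}$.

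\textbf{Step 3 (conclusion).} Combining Steps~1 and~2, $\#\Tau_k-\#\Tau_0\le\#\Tau^{(M)}-\#\Tau_0\le C_0\sum_m\#\widehat\MM_m\le C_0(2^{n}-1)\sum_{i=0}^{k-1}\#\MM_i$, so the claim holds with $C_S\definedas C_0(2^{n}-1)$, which depends only on $\Tau_0$ and $n$, as asserted. The genuinely substantial input is the $n=1$ complexity theorem of~\cite{Stevenson-refine}, which is taken as given; the only real obstacle in the above is bookkeeping, namely verifying carefully that the interleaved-completion sequence is a legitimate refinement sequence in $\TT$ and that replacing ``bisect $n$ times, then complete'' by ``$n$ times (bisect once, then complete)'' produces a mesh that is finer than $\Tau_{i+1}$ at each stage --- which follows from the uniqueness and monotonicity of the conforming completion operator.
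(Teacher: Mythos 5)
The paper does not prove this lemma: it is stated as a citation to Stevenson~\cite{Stevenson-refine}, and the same statement with $b\ge 1$ bisections per marked element appears, also without proof, as Lemma~4.3 of~\cite{CKNS-quasi-opt}. Your attempt, by contrast, supplies an actual argument reducing the general $n$ to Stevenson's single-bisection theorem by unrolling each call to \textsf{REFINE} into $n$ sweeps with a conforming completion after each; this is a sound and essentially standard way to bridge that gap, and the resulting $C_S=C_0(2^n-1)$ has the required dependence on $\Tau_0$ and $n$ only. The one piece of bookkeeping you gesture at in Step~2 but do not spell out is genuinely the crux: because of the intermediate completions, $\Tau^{(ni)}$ is in general a strict refinement of $\Tau_i$, so $\MM_i\not\subset\Tau^{(ni)}$, and after a completion some generation-$(j-1)$ descendants of $T\in\MM_i$ may already have been bisected and are no longer elements of the current mesh. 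The clean fix is to \emph{define} the sweep set $\widehat\MM_{ni+j-1}$ as those elements of $\Tau^{(ni+j-1)}$ that are generation-$(j-1)$ descendants of some $T\in\MM_i$; one then checks inductively that after sweep $j$ the current mesh restricted to any $T\in\MM_i$ refines the uniform $j$-fold bisection of $T$, that $\#\widehat\MM_{ni+j-1}\le 2^{j-1}\#\MM_i$ (so your aggregate bound $(2^{n}-1)\sum_i\#\MM_i$ holds), and that the terminal mesh $\Tau^{(nk)}$ refines $\Tau_k$. With that precision added your derivation is correct and does more than the paper itself, which simply asserts the result.
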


The next result will use Lemma~\ref{L:stevenson} and is a consequence of the global lower bound (Theorem~\ref{T:cota inferior global}), the bound for the cardinality of $\MM_k$ given by Lemma~\ref{L:cardinality} and the contraction property of Theorem~\ref{T:propiedad de contraccion}. This is the second main result of the paper.

\begin{theorem}[Quasi-optimal convergence rate]\label{T:opt_main}
Let us assume that $\Tau_0$ satisfies the labeling condition (b) of Section 4 in Ref.~\cite{Stevenson-refine}. Let us assume that the weak solution $u$ of problem~\eqref{E:cont prob} belongs to $\mathbb{A}_{s}$. If $\{U_k\}_{k\in\NN_0}$ denotes the sequence computed through the Adaptive Algorithm, and the marking parameter $\theta$ satisfies $0<\theta<\theta_0$ (cf.~\eqref{E:restriccion theta}), then
\begin{equation}\label{E:main}
\big[\|\nabla(U_k-u)\|_\Omega^2+\osci_{\Tau_k}^2(U_k)\big]^{\frac12}\le C |u|_s(\#\Tau_k-\#\Tau_0)^{-s},\qquad\forall\,k\in\NN,
\end{equation}
where $C>0$ depends on $d$, $\kappa_\TT$, problem data, the number of refinements $n$ performed over each marked element, the marking parameter $\theta$, and the regularity index $s$.
\end{theorem}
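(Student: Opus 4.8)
\textbf{Proof proposal for Theorem~\ref{T:opt_main}.}

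The plan is to follow the classical argument of Casc\'on et al.~\cite{CKNS-quasi-opt}, using the three ingredients explicitly flagged before the statement: the complexity bound for \textsf{REFINE} (Lemma~\ref{L:stevenson}), the cardinality bound for the marked set (Lemma~\ref{L:cardinality}), and the contraction property (Theorem~\ref{T:propiedad de contraccion}) together with the equivalence~\eqref{E:equivalence for error} between $[\FF(U_k)-\FF(u)]$ and the energy error. First I would abbreviate the total error by writing $E_k^2\definedas\|\nabla(U_k-u)\|_\Omega^2+\osci_{\Tau_k}^2(U_k)$, and note via the global lower bound (Theorem~\ref{T:cota inferior global}) and the chain $\|\nabla(U_k-u)\|_\Omega^2+\osci_{\Tau_k}^2(U_k)\le(C_U+1)\eta_k^2$ that $E_k^2\simeq\eta_k^2$, i.e.\ $E_k$ is equivalent to the global estimator. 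This lets me translate the contraction of Theorem~\ref{T:propiedad de contraccion}, which controls the quasi-error $[\FF(U_k)-\FF(u)]+\mu\eta_k^2$, into a statement about $E_k$.

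Next, combine Lemma~\ref{L:stevenson} and Lemma~\ref{L:cardinality}: for any $k\in\NN$,
\[
\#\Tau_k-\#\Tau_0\le C_S\sum_{i=0}^{k-1}\#\MM_i
\le C_S\left(\frac{2C_EC_A}{\nu c_A}\right)^{\frac{1}{2s}}|u|_s^{\frac1s}\sum_{i=0}^{k-1}E_i^{-\frac1s}.
\]
The remaining task is to show that the sum $\sum_{i=0}^{k-1}E_i^{-1/s}$ is controlled by $E_{k-1}^{-1/s}$ up to a constant, which is exactly where the linear convergence enters. From the contraction property and the equivalence~\eqref{E:equivalence for error}, together with $E_k^2\simeq\eta_k^2$, one gets a uniform bound of the form $E_{k}\le C\,\rho^{\,k-i}E_i$ for $i\le k$, hence $E_i^{-1/s}\le C^{1/s}\rho^{(k-1-i)/s}E_{k-1}^{-1/s}$; summing the geometric series in $\rho^{1/s}<1$ yields $\sum_{i=0}^{k-1}E_i^{-1/s}\le C(s,\rho)E_{k-1}^{-1/s}$. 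Plugging this back gives
\[
\#\Tau_k-\#\Tau_0\le C\,|u|_s^{\frac1s}E_{k-1}^{-\frac1s},
\]
and since one more step of contraction gives $E_k\le C E_{k-1}$ (so $E_{k-1}^{-1/s}\le C E_k^{-1/s}$), rearranging produces $E_k\le C|u|_s(\#\Tau_k-\#\Tau_0)^{-s}$, which is~\eqref{E:main}.

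The step I expect to be the main obstacle is establishing the uniform geometric decay $E_k\le C\rho^{k-i}E_i$ across \emph{all} indices $i\le k$, rather than just from the initial mesh. The contraction property is stated for the quasi-error $[\FF(U_k)-\FF(u)]+\mu\eta_k^2$, so I must carefully pass back and forth between this quantity, the energy error (via~\eqref{E:equivalence for error}, which costs factors $C_A/c_A$), and the total error $E_k$ (via the reliability/efficiency equivalence $E_k^2\simeq\eta_k^2$, which costs $C_U,C_L$). One has to verify that the product of all these equivalence constants remains bounded independently of $k$ and of the pair $(i,k)$ — which it does, since each is a fixed problem constant — and that the oscillation terms on different meshes are handled consistently (here $\osci_{\Tau_k}^2(U_k)\le\eta_k^2$ suffices in one direction). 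The other minor care point is that Lemma~\ref{L:cardinality} and Lemma~\ref{L:stevenson} both require $0<\theta<\theta_0$ and the proper labeling of $\Tau_0$, assumptions already in force; and that $k\in\NN$ (not $\NN_0$) is needed for Lemma~\ref{L:stevenson} to apply, matching the statement.
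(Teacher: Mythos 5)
Your proof is correct and takes essentially the paper's route: combine Lemmas~\ref{L:stevenson} and~\ref{L:cardinality} to bound $\#\Tau_k-\#\Tau_0$ by a sum of negative powers of the total error, then use the contraction of $z_i^2=[\FF(U_i)-\FF(u)]+\mu\eta_{\Tau_i}^2(U_i)$ together with its two-sided equivalence to the total error (from~\eqref{E:equivalence for error} and the global upper/lower estimator bounds) to sum the geometric series, and rearrange. The paper tracks the one-sided constants more explicitly and telescopes the series directly against $z_k^{-1/s}$ rather than $E_{k-1}^{-1/s}$, but the argument is identical in substance.
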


\begin{proof}
Let $k\in\NN$ be fixed. The global lower bound (Theorem~\ref{T:cota inferior global}) yields
\begin{equation*}
\|\nabla(U_i-u)\|_\Omega^2+\mu
\eta_{\Tau_i}^2(U_i)\le \big(1+\mu C_L^{-1}\big)\big[\|\nabla(U_i-u)\|_\Omega^2+\osci_{\Tau_i}^2(U_i)\big],\qquad 0\le i\le k-1,
\end{equation*}
where $\mu$ is the constant appearing in Theorem~\ref{T:propiedad de contraccion}. Using Lemmas~\ref{L:stevenson} and~\ref{L:cardinality} it follows that
\begin{align} 
\#\Tau_k-\#\Tau_0&\le C_S \sum_{i=0}^{k-1} \#\MM_i
\le C_S \left(\frac{2C_EC_A}{\nu c_A}\right)^{\frac{1}{2s}}|u|_s^{\frac{1}{s}} \sum_{i=0}^{k-1}\big[\|\nabla(U_i-u)\|_\Omega^2+\osci_{\Tau_i}^2(U_i)\big]^{-\frac{1}{2s}}\nonumber\\
&\le C_S \left(\frac{2C_EC_A}{\nu c_A}\right)^{\frac{1}{2s}}
|u|_s^{\frac{1}{s}} \big(1+\mu C_L^{-1}\big)^{\frac{1}{2s}} \sum_{i=0}^{k-1}\big[\|\nabla(U_i-u)\|_\Omega^2
+\mu \eta_{\Tau_i}^2(U_i)\big]^{-\frac{1}{2s}}.
\label{E:aux1 casi optimalidad}
\end{align}
Since we do not have a contraction for the quantity $\big[\|\nabla(U_i-u)\|_\Omega^2+\mu \eta_{\Tau_i}^2(U_i)\big]$ as happens in the linear problem case, we now proceed as follows. 
We define $z_i^2\definedas [\FF(U_i)-\FF(u)]+\mu \eta_{\Tau_i}^2(U_i)$, the contraction property (Theorem~\ref{T:propiedad de contraccion}) yields $z_{i+1}\leq \rho z_{i}$ and thus, $z_{i}^{-\frac{1}{s}}\leq \rho^{\frac{1}{s}} z_{i+1}^{-\frac{1}{s}}$. Since $\rho<1$, taking into account~\eqref{E:equivalence for error}, we obtain that\footnote{In this estimation we assume for simplicity that $c_A$ and $C_A$ are chosen so that $c_A\le 2\le C_A$.}
\begin{align*}
\sum_{i=0}^{k-1}\big(\|\nabla(U_i-u)\|_\Omega^2+\mu
\eta_{\Tau_i}^2(U_i)\big)^{-\frac{1}{2s}}&\le (C_A/2)^{\frac{1}{2s}}\sum_{i=0}^{k-1}z_{i}^{-\frac{1}{s}}\le (C_A/2)^{\frac{1}{2s}}\sum_{i=1}^\infty (\rho^{\frac1{s}})^i z_k^{-\frac{1}{s}} \\ &=(C_A/2)^{\frac{1}{2s}}\frac{\rho^{\frac{1}{s}}}{1-\rho^{\frac{1}{s}}} z_k^{-\frac{1}{s}}\\
&\le (C_Ac_A^{-1})^{\frac{1}{2s}}\frac{\rho^{\frac{1}{s}}}{1-\rho^{\frac{1}{s}}}\big(\|\nabla(U_k-u)\|_\Omega^2+\mu
\eta_{\Tau_k}^2(U_k)\big) ^{-\frac{1}{2s}}.
\end{align*}
Using the last estimation in~\eqref{E:aux1 casi optimalidad}, it follows that
\[
\#\Tau_k-\#\Tau_{0}\le C_S \left(\frac{2C_EC_A}{\nu c_A}\right)^{\frac{1}{2s}}|u|_s^{\frac{1}{s}}\big(1+\mu C_L^{-1}\big)^{\frac{1}{2s}} 
(C_Ac_A^{-1})^{\frac{1}{2s}}\frac{\rho^{\frac{1}{s}}}{1-\rho^{\frac{1}{s}}}\left(\|\nabla(U_k-u)\|_\Omega^2+\mu \eta_{\Tau_k}^2(U_k)\right)
^{-\frac{1}{2s}},
\]
and using that $\gosck{U_k}\le \eta_{\Tau_k}(U_k)$ and raising to the $s$-power we have that
\[
(\#\Tau_k-\#\Tau_{0})^s \le \frac{C_S^sC_A}{c_A} \left(\frac{2C_E}{\nu }\right)^{\frac{1}{2}}\big(1+\mu C_L^{-1}\big)^{\frac{1}{2}} 
\frac{\rho}{(1-\rho^{\frac{1}{s}})^s}|u|_s\big(\|\nabla(U_k-u)\|_\Omega^2+\mu\osci_{\Tau_k}^2(U_k)\big)^{-\frac{1}{2}}.
\]
Finally, from this last estimation the assertion~\eqref{E:main} follows, and the proof is concluded.
\end{proof}

We conclude this article with a few remarks.

\begin{remark}
The problem given by~\eqref{E:cons law} is a particular case of the more general problem
\begin{equation*}
\left\{
\begin{aligned}
-\nabla\cdot \big[\alpha(\,\cdot\,,|\nabla u|_\AAA^2)\AAA\nabla u\big]&= f\qquad & & \text{in}\,\Omega\\
u&= 0\qquad & &\text{on}\,\partial \Omega,
\end{aligned}
\right.
\end{equation*}
where $\alpha:\Omega\times \RR_+\to \RR_+$ and $f\in L^2(\Omega)$ satisfy the properties assumed in the previous sections, and $\AAA:\Omega\to \RR^{d\times d}$ is such that $\AAA(x)$ is a symmetric matrix, for all $x\in\Omega$, and uniformly elliptic, i.e., there exist constants $\underline{a},\overline{a}>0$ such that
\begin{equation*}
\underline{a}|\xi|^2\leq \AAA(x)\xi\cdot \xi\leq \overline{a}|\xi|^2,\qquad \forall~x\in\Omega,\,\xi\in\RR^d.
\end{equation*}
If $\AAA$ is piecewise constant over an initial conforming mesh $\Tau_0$ of $\Omega$, then the convergence and optimality results previously presented also hold for this problem.
\end{remark}

\begin{remark}\label{R:polynomial degree}
We have assumed the use of \emph{linear} finite elements for the discretization (see~\eqref{E:V_Tau}). It is important to notice that the only place where we used this is for proving~\eqref{E:bound for gtau}. The rest of the steps of the proof hold regardless of the degree of the finite element space. The use of linear finite elements is customary in nonlinear problems, because they greatly simplify the analysis.
\end{remark}

{\setlength{\parindent}{0pt}
\small

\medskip
\rule{.5\textwidth}{1.5pt}

\medskip
\textbf{Affiliations}
\begin{description}
\item[Eduardo M.\ Garau:] Consejo Nacional de Investigaciones Cient\'{\i}ficas y T\'{e}cnicas and Universidad Nacional del Litoral, Argentina, \url{egarau@santafe-conicet.gov.ar}. 

Partially supported by CONICET (Argentina) through Grant PIP 112-200801-02182, and Universidad Nacional del Litoral through Grant CAI+D PI 062-312.

Address: IMAL, G\"uemes 3450. S3000GLN Santa Fe, Argentina.

\item[Pedro Morin:] Consejo Nacional de Investigaciones Cient\'{\i}ficas y T\'{e}cnicas and Universidad Nacional del Litoral, Argentina, \url{pmorin@santafe-conicet.gov.ar}.

 Partially supported by CONICET (Argentina) through Grant PIP 112-200801-02182, and Universidad Nacional del Litoral through Grant CAI+D PI 062-312.

Address: IMAL, G\"uemes 3450. S3000GLN Santa Fe, Argentina.

\item[Carlos Zuppa:] Universidad Nacional de San Luis, Argentina,
\url{zuppa@unsl.edu.ar}.
 
Partially supported by Universidad Nacional de San Luis through Grant 22/F730-FCFMyN.

Address: Departamento de Matem\'atica, Facultad de Ciencias F\'\i sico Ma\-te\-m\'a\-ticas y Naturales, Universidad Nacional de San Luis, Chacabuco 918, D5700BWT San Luis, Argentina.
\end{description}
}

\end{document}